\theoremstyle{definition}
\theoremstyle{definition}
\tikzstyle arrowstyle=[scale=1]
\tikzstyle directed=[postaction={decorate,decoration={markings,
    mark=at position .65 with {\arrow[arrowstyle]{stealth}}}}]
\tikzstyle reverse directed=[postaction={decorate,decoration={markings,
    mark=at position .65 with {\arrowreversed[arrowstyle]{stealth};}}}]
\newtheorem{lemma}{Lemma}[section]
\newtheorem{theorem}{Theorem}[section]
\newtheorem{remark}{Remark}[section]
\newtheorem{example}{Example}[section]
\begin{document}
	
\begin{frontmatter}
\title{{\bf A new analytical technique of the fully implicit Crank-Nicolson discontinuous Galerkin method for the Ginzburg-Landau Model}}

\author{Xianxian Cao}

\author{Zhen Guan\corref{cor1}}
\ead{zhenguan1993@foxmail.com}

\author{Junjun Wang}

\cortext[cor1]{Corresponding author.}

\address{School of Mathematics and Statistics, Pingdingshan University, Pingdingshan, 467000, China}

\begin{abstract}
In this paper, a fully implicit Crank-Nicolson discontinuous Galerkin method is proposed for solving the Ginzburg-Landau equation. By leveraging a novel analytical technique, we rigorously establish the unique solvability of the constructed numerical scheme, as well as its unconditionally optimal error estimates under both the \(L^2\)-norm and the energy norm. The core of the proof hinges on the \(L^2\)-norm boundedness of the numerical solution and the refined estimation of the cubic nonlinear term. Finally, two numerical examples are presented to validate the theoretical findings.
\end{abstract}

\begin{keyword} 
Crank-Nicolson; Discontinuous Galerkin method; Ginzburg-Landau equation; unconditionally optimal error estimates; cubic nonlinear term.
\end{keyword}

\end{frontmatter}
\thispagestyle{empty}

\numberwithin{equation}{section}
\section{Introduction}\label{section01}

The Ginzburg-Landau equation is a class of important complex-valued nonlinear partial differential equations, first proposed by physicists Vitaly Ginzburg and Lev Landau in the 1950s to model low-temperature superconductivity. This model finds significant applications in fields including condensed matter physics, nonlinear optics, and nonequilibrium hydrodynamic systems \cite{Guo2020}. Under different application contexts, the Ginzburg-Landau equation exhibits distinct formulations, such as the nonlinear Ginzburg-Landau equation with a Caputo derivative \cite{Chen2023}, the Ginzburg-Landau-Schrödinger equation \cite{Yao2026}, the Kuramoto-Tsuzuki complex equation with strong nonlinear effects \cite{Zhang2025}, the dynamical Ginzburg-Landau equation under the Coulomb gauge \cite{Gao2023}, and the time-dependent Ginzburg-Landau equation under the temporal gauge \cite{Ma2023}.

In this paper, we are devoted to the discontinuous Galerkin approximation of the following two-dimensional Ginzburg-Landau equation with cubic nonlinearity in a bounded convex polygonal domain $\Omega \subset \mathbb{R}^2$:   
\begin{align}
u_t - (\nu + \mathrm{i}\alpha)\Delta u + (\kappa + \mathrm{i}\beta)\vert u \vert^2 u - \gamma u = 0, \quad (\boldsymbol{x},t) \in \Omega \times (0,T], \label{202510302231}
\end{align}
subject to the homogeneous Dirichlet boundary condition
\begin{align}
u(\boldsymbol{x},t) = 0,  \quad (\boldsymbol{x},t) \in \partial\Omega\times(0,T], \label{10302241}
\end{align}
and the initial condition
\begin{align}
u(\boldsymbol{x},0) = u^0(\boldsymbol{x}), \quad \boldsymbol{x} \in \bar{\Omega}, \label{10302242}
\end{align}
where $\nu>0,\kappa>0,\alpha,\beta,\gamma$ are given real constants, $\partial\Omega$ represents the boundary of $\Omega$, $\boldsymbol{x} = (x, y)$, $\Delta$ is the Laplace operator, $\mathrm{i}=\sqrt{-1}$ denotes the imaginary unit and $u^0(\boldsymbol{x})$ is a sufficiently smooth complex-valued function with a zero boundary trace. 

Due to the significance of this equation, numerous scholars have proposed various effective numerical methods for solving this model, including the finite difference method (FDM) \cite{ZhangYang2025,XuChang2011,HuChenChang2015}, the finite element method (FEM) \cite{LiCaoZhang2019,Shi2020,Yang2025}, the meshless method \cite{Li2025}, and the discontinuous polygonal method \cite{GuanCao2025}. More precisely, Zhang and Yang \cite{ZhangYang2025} proposed a linearized three-layer finite difference scheme for the nonlinear variable-coefficient Kuramoto-Tsuzuki complex equation. The scheme employs standard central finite differences for spatial discretization, combines central averaging with central differences for temporal discretization, and adopts a semi-implicit linearization approach to handle the nonlinear terms.
Li and Cao \cite{LiCaoZhang2019} developed a linearized Galerkin finite element method for solving the two-dimensional and three-dimensional Kuramoto-Tsuzuki equation. By classifying and refining the treatment of the nonlinear terms, the authors derived the optimal \(L^2\)-norm error estimate for the numerical solution. Li et al. \cite{Li2025} studied a fast element-free Galerkin (EFG) method for solving the nonlinear complex Ginzburg-Landau equation. Using the space-time error splitting technique, they achieved optimal order convergence. Recently, Guan and Cao \cite{GuanCao2025} proposed a discontinuous Galerkin method on polygonal meshes, implemented via the weighted \(\theta\) scheme.
They proved the boundedness and optimal convergence under various norms. From the aforementioned literature review, it can be observed that most numerical studies employ semi-implicit schemes, wherein the nonlinear terms are treated explicitly. While such schemes afford computational convenience, they compromise certain inherent properties of the original equation to a limited extent.
Motivated by this consideration, a fully implicit numerical scheme is proposed in this paper, which combines the discontinuous Galerkin method for spatial discretization with the Crank-Nicolson scheme for temporal discretization. To the best of our knowledge, such a fully implicit Crank-Nicolson discontinuous Galerkin scheme for the Ginzburg-Landau equation has not been reported in the existing literature.

As is well established in the field, two distinct classes of numerical schemes are commonly employed for nonlinear partial differential equations: fully implicit scheme and linearized scheme. For linearized scheme, the unconditionally optimal error estimates are usually derived using the space-time error splitting technique proposed by Li and Sun \cite{Li2012}. The core idea is to first introduce a time-discrete system, and then use the inverse inequality to prove the boundedness of the numerical solution in the infinity norm. More precisely,
\begin{align*}
	\|u_{h}^{n}\|_{\infty}&\leq \|R_{h}U^{n}\|_{\infty}+\|u_{h}^{n}-R_{h}U^{n}\|_{\infty}\\
	&\leq C\|U^{n}\|_{2}+Ch^{-\frac{d}{2}}\|u_{h}^{n}-R_{h}U^{n}\|\\
	&\leq C\|U^{n}\|_{2}+Ch^{2-\frac{d}{2}}\\
	&\leq C,
\end{align*}
where $U^n$ is the numerical solution of time-discrete system at the $n$-th level, $R_h$ is a suitable elliptic projection operator, $h$ and \(\tau\) denote the spatial and temporal mesh sizes, respectively. Another method employed in the literature is to prove the boundedness of numerical solution in \(H^1\)-norm by means of mathematical induction \cite{Sun2017}. 
However, those methods are only applicable to linearized numerical scheme, and fail for fully implicit algorithm.  For fully implicit scheme, there are generally two analytical methods, one of which is the duality argument technique proposed by Feng et al. \cite{FengLiMa2021}. The key points of its proof lie in the Schaefer's fixed point theorem and the discrete Ladyzhenskaya's inequality. This method has been successfully applied to solving other nonlinear partial differential equations, including the Schrödinger-Helmholtz equation \cite{GuoChenZhou2025} and the coupled Schrödinger equation \cite{LiLiSun2023}. Another approach involves the cutoff function technique \cite{WangWang2023}, the core idea lies in truncating the nonlinear term to satisfy the global Lipschitz condition. Both of the aforementioned techniques merely establish the existence of solution to the fully discrete numerical scheme and the uniqueness of  numerical solution in a neighborhood of exact solution. However, they fail to address the global unique solvability of the scheme.

To establish the global unique solvability of the proposed fully implicit numerical scheme, we develop a novel analytical approach for its proof, drawing on the core concepts of the two aforementioned techniques. This constitutes the primary contribution of this paper.
First, we establish the uniform boundedness of the numerical solution in \(L^2\)-norm. Subsequently, by utilizing this result and conducting a refined estimate of the cubic nonlinear term, we derive an estimation formula for the numerical solution and Ritz projection of the true solution. Finally, we analyze the time step \(\tau\) and spatial step $h$ under two cases (\(\tau \leq h\) and \(\tau > h\)), obtain the boundedness of the numerical solution in energy norm, and further establish the unique solvability and optimal convergence rate of the fully discrete numerical scheme. We conjecture that the analytical technique proposed in this paper can be extended to other types of nonlinear partial differential equations.

The structure of this paper is organized as follows. In Section \ref{section2}, we introduce the notations, lemmas, and the numerical scheme under investigation. Section \ref{section3} is dedicated to the theoretical analysis of the numerical scheme, encompassing the global existence and uniqueness of the numerical solution, as well as its convergence analysis. In Section \ref{section4}, two numerical examples are presented to validate the theoretical results. Finally, Section \ref{section5} provides a summary of the entire paper and an outlook for future research.

\section{The fully implicit Crank-Nicolson discontinuous Galerkin Scheme 
}\label{section2}
In this section, we first introduce key notations and preliminary lemmas employed throughout the paper, upon which we formulate the fully implicit Crank-Nicolson discontinuous Galerkin method.

Through the paper, we adopt the standard Sobolev space notation commonly used in the discontinuous finite element literature. For any integer $m\geq 1$, let $|\cdot|_{H^{m}(\Omega)}$ and $\|\cdot\|_{H^{m}(\Omega)}$ denotes the seminorm and norm of the Sobolev space $H^{m}(\Omega)$, respectively. The norm of the complex-valued Banach space $L^p(\Omega)$ is denoted by $\|\cdot\|_{p}$, where 
$1 \leq p \leq \infty$. When $p=2$, the space $L^2(\Omega)$ is a Hilbert space with respect to the following inner product and norm:
\begin{align}
	(u,v) = \int_{\Omega}uv^{*}\text{d}\boldsymbol{x},\quad \|u\| = \sqrt{(u,u)},\quad u,v\in L^2(\Omega),\notag
\end{align}
where $v^{*}$ denotes the conjugate of $v$. When no ambiguity arises, this paper adopts similar notations for the inner product and norm of vector functions. Finally, for a strongly measurable function $v:(0,T)\rightarrow X$, where $X$ is a complex Banach space, we introduce the Bochner space defined as 
$$
L^{p}({0,T;X})=\left\{v:\int_{0}^{T}\|v\|_{X}^p\text{d}t<\infty\right\},
$$
with the associated norm 

$$\|v\|_{L^{p}({0,T;X})} = \left(\int_{0}^{T}\|v\|_{X}^p\text{d}t\right)^{\frac{1}{p}},
$$
when $p=\infty$, the norm and integral defined above are implicitly subject to the usual modification.

Let \(\mathcal{K}_h\) denote a sequence of quasi-uniform triangulations of \(\Omega\), where $h$ denotes the mesh size (defined as the diameter of the largest element in \(\mathcal{K}_h\)). For simplicity, we assume \(\mathcal{K}_h\) is conforming: the intersection of any two distinct elements in the triangulation is either the empty set, a single vertex, or a complete edge. We denote the set of all edges in the partition $\mathcal{K}_{h}$ by $\mathcal{E}_{h}$. Moreover, interior edges are collected in the set $\mathcal{E}^{i}_{h}$ and boundary edges in $\mathcal{E}^{b}_{h}$. For every element $K\in \mathcal{K}_{h}$, $h_{K}$ is the diameter of $K$. Similarly, define $h_{E}$ as the length of each edge $E$. We orient each edge $E$ with a fixed normal vector $\boldsymbol{n}_E$. When $E$ is on the boundary of the region $\Omega$, take $\boldsymbol{n}_E$ as the unit outward normal vector of the boundary $\partial\Omega$. Let $ v $ be a scalar-valued function defined on $ \Omega $, and suppose $ v $ is sufficiently smooth. This smoothness ensures that $v $ has a possibly two-valued trace on every $ E \in \mathcal{E}^{i}_{h} $. Then, for all $E\in \mathcal{E}^{i}_{h}$ shared by two adjacent elements $K_1$ and $K_2$, the average and jump are defined as 
\begin{align}
\{v\} = \frac{1}{2}(v\vert_{K_1}+v\vert_{K_2}),\quad [v] = v\vert_{K_1}-v\vert_{K_2},\notag
\end{align}
respectively, where the outer normal vector of $K_1$ on the edge $E$ is consistent with $\boldsymbol{n}_E$. The concepts of average and jump can be extended to the boundary edge $E\in\mathcal{E}^{b}_{h}$:
$$
\{v\} = [v] = v\vert_{K}.
$$
Next, for a fixed positive integer $k$, we give the complex-valued discontinuous Galerkin approximation subspace $V_{h}^{k}$ defined as 
\begin{align}
	V_{h}^{k}=\{v_{h}\in L^{2}(\Omega): v_{h}|_{K}\in \mathbb{P}_{k}(K),~\forall~K\in\mathcal{K}_{h}\},\notag 
\end{align}
where $\mathbb{P}_{k}(K)$ denotes the space of polynomials of total degree less than or equal to $k$. In the finite-dimensional space $	V_{h}^{k}$, the discontinuous Galerkin norm is defined as follows 
\begin{align}
\|v_{h}\|_{\text{DG}}:= \bigg(\sum_{K \in \mathcal{K}_h} \int_K |\nabla v_h|^2\mathrm{d}\boldsymbol{x}+ \sum_{E \in \mathcal{E}_h} \frac{1}{h_E} \int_E  [v_h] ^2\mathrm{d}s\bigg)^{1/2},\label{2507281430}
\end{align}
where $\vert \cdot \vert$ denotes the Euclidean norm in $\mathbb{C}^2$.  
Based on the arguments in Reference \cite{PietroErn2010}, it can be concluded that the following Sobolev embedding inequality holds
\begin{align}
\| v_h\|_{p} \leq C  \| v_h \|_{\text{DG}}, \quad \forall ~v_h \in V_{h}^{k},\quad 1 \leq p < \infty. \label{11042314}
\end{align}
We further present the discrete Ladyzhenskaya's inequality \cite{GazcaOrozcoKaltenbach2025} which is essential to the derivation process of the optimal error estimate
\begin{align}
	\| v_h\|_{4} \leq C  \| v_h \|_{\text{DG}}^{1/2}\| v_h\|^{1/2}, \quad \forall ~v_h \in V_{h}^{k}. \label{202509042305}
\end{align}
We also recall the following global inverse inequality \cite{Chave2016,GuanCao2025}:
\begin{align}
&\|v_h\|_{\text{DG}}\leq Ch^{-1}\|v_h\|,\quad \forall~v_h\in V_{h}^{k}.	\label{10312320}
\end{align}
Moreover, let us define the discrete bilinear form corresponding to the continuous bilinear form $(\nabla u,\nabla v)$
\begin{align}
	a_h(v_h, w_h) &:= \int_\Omega\nabla_h v_h \cdot \nabla_h w_h^* \, \mathrm{d}\boldsymbol{x} 
	- \sum_{E \in \mathcal{E}_h} \int_E \{ \nabla_h v_h \} \cdot \boldsymbol{n}_E [w_h^*] \, \mathrm{d}s -\sum_{E \in \mathcal{E}_h} \int_E [v_h] \{ \nabla_h w_h^* \} \cdot \boldsymbol{n}_E \, \mathrm{d}s 
	\notag \\
	&\quad+ \sum_{E \in \mathcal{E}_h} \frac{\lambda}{h_E}\int_E [v_h][w_h^*] \, \mathrm{d}s, \quad \forall~v_h, w_h\in V_{h}^{k}, \notag
\end{align}
where $\nabla\cdot$ is the piecewise-defined gradient operator and  parameter $\lambda$, known as the penalty term, is a sufficiently large non-negative real number in this context.
It is a well-established fact that the discrete bilinear form introduced earlier satisfies the coercivity and continuity properties outlined below
\begin{align}
	C_1 \| v_h \|_{\text{DG}}^{2} \leq a_{h}(v_{h},v_{h})=:\|v_{h}\|^2_{a_h} \leq C_2 \| v_h \|_{\text{DG}}^{2}, \quad \forall~ v_h\in V_h^{k}.\label{2508251529}
\end{align}
On this basis, we define the elliptic projection operator $R_{h}:  H^{k+1}(\Omega)\rightarrow V_{h}^{k}$ satisfying
\begin{align}
	a_{h}(R_{h}u,v_{h})=-(\Delta u,v_h),\quad \forall v_{h}\in V_{h}^{k}.\label{2510311736}
\end{align}
From the argument in \cite{DiPietroErn2012,Riviere2008}, for every $u\in  H^{k+1}(\Omega)$, we know the following boundedness and approximation properties
\begin{align}
	&\|u-R_hu\|_{\infty}+\|R_hu\|_{\infty}\leq C\|u\|_{k+1},\label{202510311606}\\
	&\|R_{h}u-u\|+h\|R_{h}u-u\|_{\text{DG}}\leq Ch^{k+1}\|u\|_{k+1}. \label{2510281415}
\end{align}

Let $\{t_{n}:n=0,1,2,\cdots,N\} $ be the uniform partition of the interval $[0, T]$ with the time step $\tau=T/N$ and denote $t_{n-\frac{1}{2}}:=(n-\frac{1}{2})\tau$. For a series of functions $\{\phi^{n}:n=0,1,2,\cdots,N\}$ defined on the domain $\Omega$, we write 
\begin{align*}
D_\tau\phi^{n} &:= \frac{\phi^{n}-\phi^{n-1}}{\tau}, \quad \hat{\phi}^{n}:= \frac{\phi^{n}+\phi^{n-1}}{2},\quad n = 1,2,\cdots,N-1,N.
\end{align*}
Using some of the notations given in the preceding text, the fully implicit Crank-Nicolosn discontinuous Galerkin algorithm is to seek $u_{h}^{n}\in V_{h}^{k}$ such that for $n=1, 2, \cdots, N-1, N$ 
\begin{align}
	&(D_\tau u_{h}^{n},v_{h})+(\nu + \mathrm{i}\alpha)a_h(\hat{u}_h^n, v_h)+(\kappa + \mathrm{i}\beta)(\vert \hat{u}^n_h \vert^2 \hat{u}_h^n,v_h)-\gamma (\hat{u}_h^n,v_h)=0,\quad \forall ~v_h\in V_h^k,\label{07271656}
\end{align}
where $u_{h}^0$ is taken as the Ritz projection $R_hu^0$ of $u^0$. 

\begin{lemma}\label{lemma1}
(Brouwer Fixed Point Theorem \cite{Akrivis1991, Akrivis1993})
	Let $(H, (\cdot, \cdot))$ be a finite-dimensional inner product space, with $\|\cdot\|$ the associated norm, and $\Pi : H \to H$ be continuous. Assume moreover that there exists an $a > 0$, for any $z \in H$ and $\|z\| = a$, it holds that
	\[
	\operatorname{Re}(\Pi(z), z) \geqslant 0.
	\]
	Then there is a $z' \in H$ satisfying $\|z'\| \leqslant a$ such that $\Pi(z') = 0$.
\end{lemma}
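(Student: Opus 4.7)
The plan is to prove this by contradiction, reducing the statement to the classical Brouwer fixed point theorem on a closed ball. First I would assume, toward contradiction, that $\Pi$ has no zero on the closed ball $B_{a}:=\{z\in H:\|z\|\le a\}$. Then $\|\Pi(z)\|>0$ throughout $B_{a}$, so the map
\[
F(z):=-a\,\frac{\Pi(z)}{\|\Pi(z)\|}
\]
is continuous from $B_{a}$ into $B_{a}$ (indeed into its bounding sphere of radius $a$). This is the only place where the hypothesized non-vanishing of $\Pi$ is used, but it is essential: without it one cannot normalize.

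Since $H$ is finite-dimensional, $B_{a}$ is homeomorphic to a closed Euclidean ball, so the classical Brouwer fixed point theorem supplies some $w\in B_{a}$ with $F(w)=w$. Taking norms of both sides immediately yields $\|w\|=a$, which places $w$ exactly on the sphere where the hypothesis $\operatorname{Re}(\Pi(w),w)\ge 0$ is in force.

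Finally, I would take the inner product of $w=F(w)$ against $w$ itself and pass to real parts, producing
\[
a^{2}=\|w\|^{2}=\operatorname{Re}(F(w),w)=-\frac{a}{\|\Pi(w)\|}\operatorname{Re}(\Pi(w),w)\le 0,
\]
which contradicts $a>0$. Hence there must exist $z'\in B_{a}$ with $\Pi(z')=0$, as claimed.

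The main conceptual obstacle is really packaging the hypothesis in the right way to feed Brouwer's theorem; once one sees that the radial renormalization $-a\,\Pi/\|\Pi\|$ both lands in $B_{a}$ and, at any boundary fixed point, forces $\operatorname{Re}(\Pi(w),w)$ to be strictly negative, the rest is automatic. No quantitative estimates are required, and the finite dimensionality of $H$ enters solely through the applicability of the classical Brouwer theorem to $B_{a}$.
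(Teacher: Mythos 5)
Your proof is correct and is precisely the standard argument (normalize $-a\,\Pi/\|\Pi\|$, apply classical Brouwer on the ball, and derive $a^2\le 0$ from the boundary hypothesis); the paper itself states this lemma without proof, delegating it to the cited references of Akrivis et al., where essentially this same contradiction argument appears. No gaps: the real scalar factor $-a/\|\Pi(w)\|$ passes through $\operatorname{Re}$ correctly, and finite dimensionality is used exactly where you say it is.
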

\begin{lemma}\label{lemma2}
\text{(Discrete Gronwall inequality \cite{HeywoodRannacher1990})} Let $A\geq 0, B \geq 0$, $\{\eta^i\}_{i=1}^{N}$ and $\{\xi^i\}_{i=1}^{N}$ be two sequences of non-negative real numbers satisfying 
$$
\quad \eta^n + \tau\sum_{i=1}^n \xi^i\leq A + B\tau \sum_{i=1}^n \eta^i, \quad 1 \le n \le N.
$$
Then, the following holds when $\tau \le \frac{1}{2B}$ 
$$\eta_n +\tau\sum_{i=1}^n \xi^i\leq A \exp(2Bn\tau), \quad 1 \le n \le N.$$
\end{lemma}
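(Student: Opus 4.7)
The plan is to majorize the left-hand side by an auxiliary scalar sequence and reduce the problem to a one-step recurrence, after which the conclusion follows by iteration and the elementary bound $1+x \leq e^x$. First I would set $S_n := A + B\tau \sum_{i=1}^n \eta^i$, so that the hypothesis reads $\eta^n + \tau \sum_{i=1}^n \xi^i \leq S_n$. Since $\xi^i \geq 0$, this in particular yields the pointwise bound $\eta^n \leq S_n$, which is what makes the recurrence on $S_n$ closed.

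Next, observe that by the very definition of $S_n$ we have $S_n - S_{n-1} = B\tau\, \eta^n \leq B\tau\, S_n$, which rearranges to $(1-B\tau)\, S_n \leq S_{n-1}$. The standing hypothesis $\tau \leq 1/(2B)$ ensures $1 - B\tau \geq 1/2 > 0$, so this may be divided to give $S_n \leq (1-B\tau)^{-1} S_{n-1}$. Applying the elementary inequality $(1-x)^{-1} \leq 1 + 2x$ valid for $x \in [0, 1/2]$, together with $1+y \leq e^y$, yields $S_n \leq e^{2B\tau}\, S_{n-1}$. Iterating this bound down to $S_0 = A$ produces $S_n \leq A\, e^{2Bn\tau}$, and the claimed estimate follows at once from $\eta^n + \tau \sum_{i=1}^n \xi^i \leq S_n$.

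Since the lemma is a purely algebraic statement about nonnegative scalar sequences, I expect no real obstacle; the argument is entirely routine. The only step where care is required is the division by $1-B\tau$, which must be bounded away from zero, and this is precisely where the restriction $\tau \leq 1/(2B)$ enters. It is also the source of the factor $2$ in the exponent $\exp(2Bn\tau)$, as opposed to the $\exp(Bn\tau)$ one would obtain in the continuous Gronwall analogue.
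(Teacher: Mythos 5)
Your proof is correct: the auxiliary sequence $S_n = A + B\tau\sum_{i=1}^n \eta^i$, the one-step bound $(1-B\tau)S_n \le S_{n-1}$, and the elementary estimates $(1-x)^{-1}\le 1+2x \le e^{2x}$ for $x\in[0,\tfrac12]$ assemble into a complete and standard argument, and this is exactly where the hypothesis $\tau\le \tfrac{1}{2B}$ and the factor $2$ in the exponent come from. The paper itself states this lemma only as a cited result from Heywood and Rannacher and supplies no proof, so there is nothing to compare against; your derivation matches the classical one.
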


\begin{lemma}\label{lemma3}
\text{(Discrete Gronwall inequality \cite{Sun2023})} Suppose $\{F^n\}_{n=0}^N$ is a nonnegative sequence, $C$ and $G$ are two non-negative constants satisfying
\begin{align*}
	F^{n} \leqslant (1 + C\tau)F^{n-1} + \tau G, \quad n = 1,2,\cdots,N,
\end{align*}
then
\begin{align}
	F^n \leqslant \mathrm{e}^{Cn\tau}\left(F^0 + \frac{G}{C}\right), \quad n = 1,2,\cdots,N.
\end{align}
\end{lemma}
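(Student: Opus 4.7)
The plan is to prove the inequality by iterating the recurrence and then applying the elementary bound $1+x\le e^{x}$. Since the hypothesis $F^{n} \le (1+C\tau)F^{n-1}+\tau G$ has the shape of a standard linear recursion, I would first unroll it $n$ times rather than use induction directly, because iteration keeps the constants transparent and immediately exposes a geometric sum.

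First, I would apply the hypothesis repeatedly: replacing $F^{n-1}$ using the same inequality, then $F^{n-2}$, and so on down to $F^{0}$, yields
\begin{equation*}
F^{n} \le (1+C\tau)^{n} F^{0} + \tau G \sum_{k=0}^{n-1}(1+C\tau)^{k}.
\end{equation*}
Next I would evaluate the geometric sum in closed form, $\sum_{k=0}^{n-1}(1+C\tau)^{k} = \frac{(1+C\tau)^{n}-1}{C\tau}$, which is valid as long as $C>0$; the case $C=0$ follows trivially by direct iteration and a separate one-line argument, or by taking a limit. Substituting this back gives
\begin{equation*}
F^{n} \le (1+C\tau)^{n} F^{0} + \frac{G}{C}\bigl[(1+C\tau)^{n}-1\bigr] \le (1+C\tau)^{n}\left(F^{0}+\frac{G}{C}\right).
\end{equation*}

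Finally, I would close the estimate by the elementary inequality $1+C\tau \le e^{C\tau}$, so that $(1+C\tau)^{n}\le e^{Cn\tau}$, which yields the claimed bound $F^{n}\le e^{Cn\tau}(F^{0}+G/C)$. I do not anticipate a genuine obstacle in this proof; the only mildly delicate point is the degenerate case $C=0$, for which the geometric-sum step must be replaced by the explicit computation $\sum_{k=0}^{n-1} 1 = n$ and the final bound reduces to $F^{n}\le F^{0}+n\tau G$, consistent with taking the limit $C\downarrow 0$ in the general estimate. Everything else is routine manipulation.
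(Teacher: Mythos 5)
Your proof is correct: unrolling the recurrence, summing the geometric series, and invoking $1+C\tau\le e^{C\tau}$ is the standard argument, and your handling of the degenerate case $C=0$ (where the stated bound $G/C$ is formally meaningless and must be replaced by $F^0+n\tau G$) is a sensible precaution. Note, however, that the paper itself gives no proof of this lemma --- it is quoted as a known result from the cited reference --- so there is no in-paper argument to compare against; your derivation is simply the canonical one.
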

\begin{lemma}\label{lemma4}	
(Transfer Inequality \cite{Wang2014}) For a normed linear space $\mathcal{V}$ with the norm $\|\cdot\|_{\mathcal{V}}$, and for elements $v^0, v^1, \ldots, v^N \in \mathcal{V}$, the result below holds
\begin{align}
	\|v^n\|_{\mathcal{V}} \leq 2 \sum_{m=1}^{n} \left\|\hat{v}^{m}\right\|_{\mathcal{V}} + \|v^0\|_{\mathcal{V}}, \quad 1 \leq n \leq N.\label{2507302329}	
\end{align}
\end{lemma}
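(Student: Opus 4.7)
The plan is to invert the averaging identity and then telescope. From $\hat{v}^{m} = (v^{m}+v^{m-1})/2$ one immediately obtains the one-step recursion
\begin{equation*}
v^{m} = 2\hat{v}^{m} - v^{m-1}, \quad m = 1,2,\dots,N.
\end{equation*}
First I would use this recursion to express $v^{n}$ in closed form as an alternating combination of the averages $\hat{v}^{1},\dots,\hat{v}^{n}$ together with the initial datum $v^{0}$, so that the norm estimate reduces to the triangle inequality.

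Iterating the recursion (or equivalently, proceeding by induction on $n$) yields
\begin{equation*}
v^{n} = 2\sum_{m=1}^{n} (-1)^{n-m}\hat{v}^{m} + (-1)^{n} v^{0}, \quad 1 \leq n \leq N.
\end{equation*}
The induction is transparent: the base case $n=1$ is exactly $v^{1}=2\hat{v}^{1}-v^{0}$, and the inductive step substitutes the formula for $v^{n-1}$ into $v^{n} = 2\hat{v}^{n} - v^{n-1}$; the overall minus sign relabels $(-1)^{n-1-m}$ as $(-1)^{n-m}$ and produces the new leading term $2\hat{v}^{n}$.

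Finally, applying the triangle inequality for $\|\cdot\|_{\mathcal{V}}$ to this closed-form expression, together with $|(-1)^{n-m}|=1$, gives
\begin{equation*}
\|v^{n}\|_{\mathcal{V}} \leq 2\sum_{m=1}^{n} \|\hat{v}^{m}\|_{\mathcal{V}} + \|v^{0}\|_{\mathcal{V}},
\end{equation*}
which is exactly \eqref{2507302329}. There is no substantive obstacle here: the content is a short algebraic identity followed by the triangle inequality, and the argument is valid in any normed linear space $\mathcal{V}$, matching the full generality of the statement. The only point worth being careful about is correctly tracking the alternating signs, but since they play no role after the modulus is taken, nothing beyond bookkeeping is required.
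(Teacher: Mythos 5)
Your proof is correct. The paper states this lemma without proof, simply citing \cite{Wang2014}; your argument --- inverting the average to get $v^{m}=2\hat{v}^{m}-v^{m-1}$, iterating to the alternating closed form, and applying the triangle inequality --- is the standard derivation and fully justifies the stated bound.
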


\section{Numerical analysis of the fully implicit discontinuous Galerkin scheme} \label{section3}
In this section, we initiate the theoretical analysis of the proposed numerical scheme. To this end, we first establish the existence of the numerical solution, followed by a rigorous proof of its \(L^2\)-norm boundedness. Based on this result, we further derive the boundedness of the numerical solution under the energy norm, as well as the optimal order error estimates in both the \(L^2\)-norm and the energy norm.
\subsection{Existence of the discontinuous Galerkin solution}  
\begin{theorem}\label{theorem1}
When $\tau\gamma<2$, the fully discrete discontinuous Galerkin algorithm \eqref{07271656} has a solution.
\begin{proof}
Assume that the numerical solution $u_{h}^{n-1}$ at the $(n-1)$-th layer has been obtained. Then \eqref{07271656} can be interpreted as a nonlinear system in terms of the average solution $\hat{u}_{h}^n$:
\begin{align}
	&\frac{2}{\tau}\left(\hat{u}_{h}^n-u_{h}^{n-1},v_{h}\right)+(\nu + \mathrm{i}\alpha)a_h(\hat{u}_h^n, v_h)+(\kappa + \mathrm{i}\beta)(\vert \hat{u}^n_h \vert^2 \hat{u}_h^n,v_h)-\gamma (\hat{u}_h^n,v_h)=0,\quad \forall ~v_h\in V_h^k.\label{10311810}
\end{align}
Obviously, if we can prove the existence of a solution to equation \eqref{10311810}, then we can easily know that $u_h^n=2\hat{u}_h^n-u_h^{n-1}$ is 
a solution of \eqref{07271656}. For this purpose, define the operator $\Pi: V_h^k\rightarrow V_h^k $ by 
\begin{align}
\left(\Pi (w), v_h\right) = \frac{2}{\tau}\left(w-u_{h}^{n-1},v_{h}\right)+(\nu + \mathrm{i}\alpha)a_h(w, v_h)+(\kappa + \mathrm{i}\beta)(\vert w \vert^2 w,v_h)-\gamma (w,v_h), \quad \forall ~v_h\in V_h^k.\label{11021104}
\end{align}
Next, we will prove that this operator is continuous. In fact, given $w_1,w_2\in V_h^k$, it can be deduced from \eqref{11021104} that 
\begin{align}
\left(\Pi (w_1)-\Pi (w_2), v_h\right)&=\frac{2}{\tau}\left(w_1-w_2,v_h\right)+(\nu + \mathrm{i}\alpha)a_h(w_1-w_2, v_h)+(\kappa + \mathrm{i}\beta)(\vert w_1 \vert^2 w_1-\vert w_2 \vert^2 w_2,v_h)\notag\\
&\quad-\gamma (w_1-w_2,v_h),\quad\forall ~v_h\in V_h^k.\label{11021111}
\end{align}
By taking $v_h=\Pi (w_1)-\Pi (w_2)$ in \eqref{11021111} and employing the Cauchy-Schwartz inequality, we find that 
\begin{align}
\|\Pi (w_1)-\Pi (w_2)\|^2 &\leq C\|w_1-w_2\|_{a_h}\|\Pi (w_1)-\Pi (w_2)\|_{a_h}+C\left\|\vert w_1 \vert^2 w_1-\vert w_2 \vert^2 w_2\right\|\|\Pi (w_1)-\Pi (w_2)\|\notag\\
&\quad+\left(C+\frac{2}{\tau}\right)\|w_1-w_2\|\|\Pi (w_1)-\Pi (w_2)\|\notag\\
&\leq C\|w_1-w_2\|_{\text{DG}}\|\Pi (w_1)-\Pi (w_2)\|_{\text{DG}}+C\left\|\vert w_1 \vert^2 w_1-\vert w_2 \vert^2 w_2\right\|\|\Pi (w_1)-\Pi (w_2)\|\notag\\
&\quad+\left(C+\frac{2}{\tau}\right)\|w_1-w_2\|\|\Pi (w_1)-\Pi (w_2)\|\notag\\
&\leq \frac{C}{h^2}\|w_1-w_2\|\|\Pi (w_1)-\Pi (w_2)\|+\left(C+\frac{2}{\tau}\right)\|w_1-w_2\|\|\Pi (w_1)-\Pi (w_2)\|\notag\\
&\quad+C\left(\|w_1\|^2_{\infty}+\|w_1\|_{\infty}\|w_2\|_{\infty}+\|w_2\|^2_{\infty}\right)\|w_1-w_2\|\|\Pi (w_1)-\Pi (w_2)\|,\label{11021656}
\end{align}
where we have also used \eqref{2508251529}, \eqref{10312320} and the following elementary inequality 
\begin{align}
\left||z_1|^2z_1-|z_2|^2z_2\right|\leq \left(|z_1|^2+|z_1||z_2|+|z_2|^2\right)|z_1-z_2|,\quad \forall~z_1,z_2\in \mathbb{C}. \label{202511042246}
\end{align}
By dividing both sides of inequality \eqref{11021656} by $\|\Pi (w_1)-\Pi (w_2)\|$ simultaneously, it holds that 
\begin{align}
\|\Pi (w_1)-\Pi (w_2)\|\leq C\left(\frac{1}{ h^2}+1+\frac{2}{\tau}+\|w_1\|^2_{\infty}+\|w_1\|_{\infty}\|w_2\|_{\infty}+\|w_2\|^2_{\infty}\right)\|w_1-w_2\|.\label{11021700}
\end{align}
From \eqref{11021700}, we can easily see that the operator $\Pi$ is continuous. Furthermore, choosing $v_h=w$ in \eqref{11021104}, taking the real part of both sides and using the Cauchy-Schwartz inequality give that 
\begin{align}
\text{Re}\left(\Pi(w),w\right) &=  \frac{2}{\tau}\left(\|w\|^2-\text{Re}(u_{h}^{n-1},w)\right)+\nu\|w\|_{a_h}+\kappa\|w\|_{4}^4-\gamma\|w\|^2\notag\\
&\geq \frac{2}{\tau}\left( \|w\|^2-\|u_{h}^{n-1}\|\|w\|\right)-\gamma\|w\|^2\notag\\
&\geq \frac{2}{\tau}\left[\left(1-\frac{1}{2}\gamma\tau\right)\|w\|-\|u_{h}^{n-1}\|\right]\cdot\|w\|.\label{11021726}
\end{align}
Obviously, when $\tau\gamma< 2 $ and $\|w\|=\frac{\|u_h^{n-1}\|}{1-\frac{1}{2}\tau\gamma}+1$, it follows that 
\begin{align}
\text{Re}\left(\Pi(w),w\right)\geq 0.\notag
\end{align}
By the Brouwer fixed point theorem described in Lemma \ref{lemma1}, we can conclude that there exists a $w'$ satisfying $\|w'\|\leq \frac{\|u_h^{n-1}\|}{1-\frac{1}{2}\tau\gamma}+1$ such that 
$$
\Pi(w')=0.
$$
Therefore, there is a solution to \eqref{07271656}. 
All this completes the proof. 
\end{proof}
\end{theorem}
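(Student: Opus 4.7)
The strategy is to apply Brouwer's fixed point theorem (Lemma \ref{lemma1}) after reformulating the one-step update as a single nonlinear equation for the midpoint $\hat{u}_h^n$. Because $u_h^n = 2\hat{u}_h^n - u_h^{n-1}$, the scheme \eqref{07271656} is equivalent to an equation for $\hat{u}_h^n$ alone, and any root of the latter immediately produces $u_h^n$. Thus the task reduces to finding a zero in $V_h^k$ of the operator $\Pi$ defined by the Riesz-type identity
\begin{align*}
(\Pi(w),v_h) = \tfrac{2}{\tau}(w-u_h^{n-1},v_h) + (\nu+\mathrm{i}\alpha)a_h(w,v_h) + (\kappa+\mathrm{i}\beta)(|w|^2 w,v_h) - \gamma(w,v_h).
\end{align*}

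The first step would be to verify continuity of $\Pi$. Testing the difference $\Pi(w_1)-\Pi(w_2)$ against itself and applying Cauchy--Schwarz, the linear pieces can be controlled using the norm equivalence \eqref{2508251529} together with the inverse inequality \eqref{10312320} to trade the DG-norm for the $L^2$-norm; the cubic difference is handled via the elementary bound \eqref{202511042246} and the (automatic) finiteness of $L^\infty$-norms on the finite-dimensional space $V_h^k$. After dividing out one factor of $\|\Pi(w_1)-\Pi(w_2)\|$, this produces a Lipschitz-type bound in the $L^2$-norm, from which continuity follows.

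The second step is to produce a radius $a>0$ on which $\text{Re}(\Pi(w),w)\geq 0$ whenever $\|w\|=a$. Testing with $v_h=w$ and taking real parts, the crucial observation is that since $\nu,\kappa>0$, the two complex-weighted contributions $\text{Re}\bigl((\nu+\mathrm{i}\alpha)a_h(w,w)\bigr) = \nu\|w\|_{a_h}^2$ and $\text{Re}\bigl((\kappa+\mathrm{i}\beta)\|w\|_4^4\bigr) = \kappa\|w\|_4^4$ are both non-negative and may be discarded. What remains is a quadratic-minus-linear expression in $\|w\|$, with leading coefficient $\tfrac{2}{\tau}-\gamma$, which is positive precisely when $\tau\gamma<2$. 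Choosing $a = \|u_h^{n-1}\|/(1-\tfrac{1}{2}\tau\gamma) + 1$ then makes the lower bound non-negative, and Lemma \ref{lemma1} delivers the desired $w'$ with $\Pi(w')=0$; setting $u_h^n := 2w' - u_h^{n-1}$ completes the existence proof.

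I expect the continuity estimate to be the technically most involved step — in particular, carefully routing the cubic term through \eqref{202511042246} while still obtaining an $L^2$-to-$L^2$ Lipschitz bound — whereas the conceptual heart of the argument is the sign analysis in the coercivity-type bound: the dissipativity built into the Laplacian ($\nu>0$) and the defocusing cubic ($\kappa>0$) is exactly what lets both complex-coefficient terms drop out of the real-part estimate, and the condition $\tau\gamma<2$ is precisely what is needed to dominate the destabilizing linear reaction $-\gamma u$.
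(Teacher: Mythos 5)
Your proposal is correct and follows essentially the same route as the paper: the same reformulation in terms of the midpoint $\hat{u}_h^n$, the same operator $\Pi$, the same continuity argument via the inverse inequality and the elementary cubic bound \eqref{202511042246}, and the same sign analysis discarding the non-negative terms $\nu\|w\|_{a_h}^2$ and $\kappa\|w\|_4^4$ before choosing the radius $\|u_h^{n-1}\|/(1-\tfrac{1}{2}\tau\gamma)+1$ for Brouwer's theorem. No substantive differences to report.
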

\subsection{Boundedness of the discontinuous Galerkin solution} 
\begin{theorem}\label{theorem2}
Suppose $u_h^n$ is the solution of the fully discrete numerical scheme \eqref{07271656}. Then when $\tau\gamma\leq 1$, we have 
\begin{align*}
\|u_h^{n}\|\leq e^{2T\max\{0,\gamma\}}\|u_h^0\|, \quad 1\leq n \leq N.
\end{align*}
\end{theorem}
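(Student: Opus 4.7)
The plan is to perform a standard discrete energy estimate on the scheme \eqref{07271656}, exploiting the fact that the real parts of the dispersive contributions vanish so that only the terms carrying the real coefficients $\nu$, $\kappa$, $\gamma$ survive. First I would test \eqref{07271656} with $v_h=\hat{u}_h^n$ and take the real part of the resulting identity. The crucial algebraic observation is the complex Crank-Nicolson energy identity
\begin{align}
\operatorname{Re}(D_\tau u_h^n,\hat{u}_h^n)=\frac{1}{2\tau}\bigl(\|u_h^n\|^2-\|u_h^{n-1}\|^2\bigr),\notag
\end{align}
which follows from expanding $(u_h^n-u_h^{n-1})\overline{(u_h^n+u_h^{n-1})/2}$ and noting that the cross terms contribute a purely imaginary quantity.

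Next I would verify that the other terms on the left are either non-negative or purely real. Since $a_h(\hat{u}_h^n,\hat{u}_h^n)=\|\hat{u}_h^n\|_{a_h}^2$ is real and non-negative by the coercivity bound \eqref{2508251529}, the real part of $(\nu+\mathrm{i}\alpha)a_h(\hat{u}_h^n,\hat{u}_h^n)$ equals $\nu\|\hat{u}_h^n\|_{a_h}^2\geq 0$. Similarly, $(|\hat{u}_h^n|^2\hat{u}_h^n,\hat{u}_h^n)=\|\hat{u}_h^n\|_4^4$ is real and non-negative, so the real part of the cubic term is $\kappa\|\hat{u}_h^n\|_4^4\geq 0$. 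Discarding these two non-negative contributions, I obtain
\begin{align}
\frac{1}{2\tau}\bigl(\|u_h^n\|^2-\|u_h^{n-1}\|^2\bigr)\leq \gamma\,\|\hat{u}_h^n\|^2\leq \max\{0,\gamma\}\cdot\tfrac{1}{2}\bigl(\|u_h^n\|^2+\|u_h^{n-1}\|^2\bigr),\notag
\end{align}
where the second inequality uses $\|\hat{u}_h^n\|^2\leq \tfrac{1}{2}(\|u_h^n\|^2+\|u_h^{n-1}\|^2)$.

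Finally, I would rearrange this into the form
\begin{align}
\bigl(1-\tau\max\{0,\gamma\}\bigr)\|u_h^n\|^2\leq \bigl(1+\tau\max\{0,\gamma\}\bigr)\|u_h^{n-1}\|^2,\notag
\end{align}
and under the stated step-size restriction $\tau\gamma\leq 1$ (effectively $\tau\max\{0,\gamma\}$ bounded away from $1$) I would estimate the amplification factor by $1+C\tau$ with $C=4\max\{0,\gamma\}$, so that Lemma \ref{lemma3} with $G=0$ yields $\|u_h^n\|^2\leq e^{4T\max\{0,\gamma\}}\|u_h^0\|^2$ and hence the claim.

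There is no real analytic obstacle here, since the structural feature $\operatorname{Re}(\mathrm{i}\alpha)=\operatorname{Re}(\mathrm{i}\beta)=0$ makes the dispersive and phase parts of the equation energetically invisible; the only subtle point is the complex-valued discrete energy identity in the first step and the careful bookkeeping of the real parts. Importantly, this $L^2$-bound is obtained without any mesh condition relating $\tau$ and $h$, which is exactly what the later analysis in Section \ref{section3} will rely on when estimating the cubic nonlinearity in the energy norm.
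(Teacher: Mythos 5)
Your overall strategy --- testing with $v_h=\hat{u}_h^n$, taking real parts, using the identity $\operatorname{Re}(D_\tau u_h^n,\hat{u}_h^n)=\frac{1}{2\tau}\left(\|u_h^n\|^2-\|u_h^{n-1}\|^2\right)$, and discarding the non-negative contributions $\nu\|\hat{u}_h^n\|_{a_h}^2$ and $\kappa\|\hat{u}_h^n\|_4^4$ --- is exactly the paper's, and it is correct up to the inequality $\|u_h^n\|^2-\|u_h^{n-1}\|^2\leq \tau\max\{0,\gamma\}\left(\|u_h^n\|^2+\|u_h^{n-1}\|^2\right)$. The gap is in the final step. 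Rearranging to $\left(1-\tau\max\{0,\gamma\}\right)\|u_h^n\|^2\leq\left(1+\tau\max\{0,\gamma\}\right)\|u_h^{n-1}\|^2$ and bounding the amplification factor by $1+4\tau\max\{0,\gamma\}$ requires $\frac{2\tau\gamma}{1-\tau\gamma}\leq 4\tau\gamma$, i.e.\ $\tau\gamma\leq\frac{1}{2}$. Under the hypothesis actually stated in the theorem, $\tau\gamma\leq 1$, the coefficient $1-\tau\gamma$ may vanish and the recursion then gives no control on $\|u_h^n\|$ at all (at $\tau\gamma=1$ the inequality reads $0\leq 2\|u_h^{n-1}\|^2$). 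Your parenthetical ``effectively $\tau\max\{0,\gamma\}$ bounded away from $1$'' is precisely the point at which you silently strengthen the hypothesis.

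The paper closes this gap with a small but essential trick: instead of recursing on the squared norms, it bounds $2\tau\gamma\|\hat{u}_h^n\|^2\leq\frac{\tau\gamma}{2}\left(\|u_h^n\|+\|u_h^{n-1}\|\right)^2$, factors the difference of squares as $\left(\|u_h^n\|-\|u_h^{n-1}\|\right)\left(\|u_h^n\|+\|u_h^{n-1}\|\right)$, divides through by the common factor $\|u_h^n\|+\|u_h^{n-1}\|$, and arrives at $\left(1-\frac{\tau\gamma}{2}\right)\|u_h^n\|\leq\left(1+\frac{\tau\gamma}{2}\right)\|u_h^{n-1}\|$. The denominator is now $1-\frac{\tau\gamma}{2}\geq\frac{1}{2}$ whenever $\tau\gamma\leq 1$, so the amplification factor is bounded by $1+2\tau\gamma$ and Lemma \ref{lemma3} gives exactly the constant $e^{2\gamma T}$. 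Your final constant is therefore right, but to prove the theorem under the stated restriction $\tau\gamma\leq 1$ you need this factorization (or you must weaken the statement to $\tau\gamma\leq\frac{1}{2}$). The paper also treats $\gamma\leq 0$ as a separate, trivial monotonicity case rather than folding it into $\max\{0,\gamma\}$; that difference is purely cosmetic.
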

\begin{proof}
Taking $v_h=\hat{u}_h^n$ in \eqref{07271656}, we have 
\begin{align}
	&\left(D_\tau u_{h}^{n},\hat{u}_h^n\right)+(\nu + \mathrm{i}\alpha)\|\hat{u}_h^n\|_{a_h}^2+(\kappa + \mathrm{i}\beta)\|\hat{u}_h^n\|_4^4-\gamma \|\hat{u}_h^n\|^2=0.\label{11021940}
\end{align}
Noticing the fact that 
\begin{align*}
\text{Re}\left(D_\tau u_{h}^{n},\hat{u}_h^n\right) = \frac{1}{2\tau}\left(\|u_h^n\|^2-\|u_h^{n-1}\|^2\right). 
\end{align*}
Taking the real part of both sides of equation \eqref{11021940}, we obtain
\begin{align}
\|u_h^n\|^2-\|u_h^{n-1}\|^2\leq 2\tau\gamma\|\hat{u}_h^n\|^2,\quad 1\leq n \leq N.\label{11022001}
\end{align}
Therefore, when $\gamma\leq 0$, it arrives that 
\begin{align}
\|u_h^n\|\leq \|u_h^0\|,\quad 1\leq n \leq N.\label{11022101}
\end{align}
When $\gamma>0$, it follows from \eqref{11022001} that 
\begin{align*}
\|u_h^n\|^2-\|u_h^{n-1}\|^2\leq 2\tau\gamma\|\hat{u}_h^n\|^2\leq \frac{\tau\gamma}{2}\left(\|u_h^n\|+\|u_h^{n-1}\|\right)^2.
\end{align*}
A simple transformation gives 
\begin{align*}
\left(1-\frac{\tau\gamma}{2}\right)\|u_h^n\|\leq \left(1+\frac{\tau\gamma}{2}\right)\|u_h^{n-1}\|,\quad 1\leq n \leq N.
\end{align*}
When $\tau\gamma\leq1$, we have 
\begin{align*}
\|u_h^n\|\leq \frac{1+\frac{\tau\gamma}{2}}{1-\frac{\tau\gamma}{2}}\|u_h^{n-1}\|= \left(1+\frac{\tau\gamma}{1-\frac{\tau\gamma}{2}}\right)\|u_h^{n-1}\|\leq\left(1+2\tau\gamma\right)\|u_h^{n-1}\|, \quad 1\leq n \leq N.
\end{align*}
By virtue of the discrete Gronwall inequality given in Lemma \ref{lemma3}, it holds that 
\begin{align}
\|u_h^n\|\leq e^{2\gamma n\tau}\|u_{h}^0\|\leq e^{2\gamma T}\|u_{h}^0\|,\quad 1\leq n \leq N.\label{11022059}
\end{align}
It can be concluded that the theorem holds true by combining inequalities \eqref{11022101} and \eqref{11022059}.
\end{proof}
\begin{remark}
By utilizing the definition of the initial approximate solution $u_{h}^0$ and the approximation property \eqref{2510281415} of the Ritz projection, we can easily obtain that 
\begin{align}
\|u_h^0\|= \|R_hu^0\|\leq \|R_hu^0-u^0\|+\|u^0\|\leq C.\label{11022145}
\end{align}
Using the conclusions of Theorem \ref{theorem2} and \eqref{11022145}, it is known that there exists a positive constant $C$ such that the following equation holds
$$
\|u_h^n\|\leq C,\quad 0\leq n \leq N.
$$
i.e., the numerical solution $\|u_h^n\|$ is bounded in the $L^2$-norm. Furthermore, with the help of the estimate \eqref{2510281415} of the Ritz projection $R_h$, we know that 
\begin{align}
\|\eta_h^n\|=\|R_hu^{n}-u_h^{n}\|\leq \|R_hu^n\|+\|u^n_h\|\leq C, \quad 0\leq n \leq N.\label{11031352}
\end{align}

\end{remark}

To prove the boundedness of the numerical solution under the energy norm, we present the following crucial lemma.
\begin{lemma}\label{lemma10}
Let $u_h^n$ be the solution of the equation \eqref{202510302231}-\eqref{10302242} and $u(t_{n})$ is the solution of the discrete problem \eqref{07271656}. Denote 
\begin{align}
\xi^n=u^{n}-R_hu^{n},\quad \eta_h^n=R_hu^{n}-u_h^{n},\quad 0\leq n \leq N.\notag 
\end{align}
Then there exist a positive constant $\tau_1$ such that when $\tau\leq\tau_1$, we have 
\begin{align}
\|\eta_h^n\|+\tau\|\eta_h^n\|_{\text{DG}}\leq C\left(\tau^2
+h^{k+1}\right), \quad 0 \leq n \leq N.\label{2508292249}
\end{align}
\end{lemma}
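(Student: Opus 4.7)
\vspace{2mm}
\noindent\emph{Proof proposal.}
The plan is to carry out a standard Ritz-projection-based error analysis for this Crank-Nicolson DG scheme, with the main work going into the cubic nonlinear term. First I would derive an error equation for $\eta_h^n$ by evaluating \eqref{202510302231} at $t = t_{n-1/2}$, testing against $v_h\in V_h^k$, and rewriting the Laplacian contribution via the Ritz projection identity \eqref{2510311736}. Subtracting \eqref{07271656} and using the decompositions $u^n - u_h^n = \xi^n + \eta_h^n$ and $u(t_{n-1/2}) - \hat{u}_h^n = [u(t_{n-1/2}) - \hat{u}^n] + \hat{\xi}^n + \hat{\eta}_h^n$, one obtains
\begin{align*}
(D_\tau \eta_h^n, v_h) + (\nu + \mathrm{i}\alpha)\, a_h(\hat{\eta}_h^n, v_h) = -(D_\tau \xi^n, v_h) + \gamma(\hat{\xi}^n + \hat{\eta}_h^n, v_h) - (\kappa + \mathrm{i}\beta)\,(\mathcal{N}^n, v_h) + (\mathcal{R}^n, v_h),
\end{align*}
where $\mathcal{N}^n = |\hat{u}^n|^2 \hat{u}^n - |\hat{u}_h^n|^2 \hat{u}_h^n$ and $\mathcal{R}^n$ collects Crank-Nicolson truncation terms of order $O(\tau^2)$ under sufficient smoothness of $u$.

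Next I would choose $v_h = \hat{\eta}_h^n$ and take real parts. The time-derivative term yields $\tfrac{1}{2\tau}(\|\eta_h^n\|^2 - \|\eta_h^{n-1}\|^2)$, and by \eqref{2508251529} the stiffness term produces $\nu\|\hat{\eta}_h^n\|_{a_h}^2 \ge \nu C_1 \|\hat{\eta}_h^n\|_{\text{DG}}^2$. Terms involving $D_\tau\xi^n$, $\hat\xi^n$, and $\mathcal{R}^n$ are treated by Cauchy-Schwarz, Young's inequality, and the projection bound \eqref{2510281415}, contributing $C(\tau^2+h^{k+1})^2 + C\|\hat{\eta}_h^n\|^2$. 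The crucial term is $(\mathcal{N}^n,\hat{\eta}_h^n)$. I would split
\begin{align*}
\mathcal{N}^n = \bigl(|\hat{u}^n|^2\hat{u}^n - |\widehat{R_h u}^n|^2 \widehat{R_h u}^n\bigr) + \bigl(|\widehat{R_h u}^n|^2 \widehat{R_h u}^n - |\hat{u}_h^n|^2 \hat{u}_h^n\bigr),
\end{align*}
apply \eqref{202511042246} to each bracket, and use the $L^\infty$-boundedness \eqref{202510311606} to control the factors $|\hat u^n|$ and $|\widehat{R_h u}^n|$. The first bracket contributes at most $Ch^{k+1}\|\hat{\eta}_h^n\|$. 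For the second bracket, after extracting the $L^\infty$ part of $|\widehat{R_h u}^n|^2$, the residue is $\int|\hat u_h^n|^2 |\hat{\eta}_h^n|^2\mathrm d\boldsymbol{x}$, which I would estimate via Hölder together with the discrete Ladyzhenskaya inequality \eqref{202509042305}, so that after using $\|\hat u_h^n\|\le C$ from Theorem \ref{theorem2} and $\|\hat u_h^n\|_{\text{DG}}\le \|\hat{\eta}_h^n\|_{\text{DG}}+C$, the remainder takes the schematic form $C\|\hat{\eta}_h^n\|\|\hat{\eta}_h^n\|_{\text{DG}} + C\|\hat{\eta}_h^n\|\|\hat{\eta}_h^n\|_{\text{DG}}^2$.

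The second of these two pieces is the main obstacle, since absorbing $\|\hat{\eta}_h^n\|_{\text{DG}}^2$ into the coercivity requires smallness of $\|\hat{\eta}_h^n\|$, which the $L^2$-boundedness of Theorem \ref{theorem2} alone does not provide. I would close the argument by strong induction: assume $\|\eta_h^m\| \le C_\star(\tau^2 + h^{k+1})$ for $0 \le m \le n-1$ (trivially true for $m=0$ because $u_h^0 = R_h u^0$ gives $\eta_h^0 = 0$), so that for $\tau\le\tau_1$ and $h$ small, $\|\eta_h^{n-1}\|$ is as small as desired. Writing $\|\hat{\eta}_h^n\| \le \tfrac12\|\eta_h^n\| + \tfrac12\|\eta_h^{n-1}\|$ and applying Young's inequality to the mixed product, the offending contribution splits into $\epsilon\|\hat{\eta}_h^n\|_{\text{DG}}^2$, which is absorbed into $\nu C_1\|\hat{\eta}_h^n\|_{\text{DG}}^2$, plus lower-order terms of the form $C\|\eta_h^n\|^2 + C\|\eta_h^{n-1}\|^2 + C(\tau^2 + h^{k+1})^2$. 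Multiplying by $2\tau$, summing from $m=1$ to $n$, and applying the discrete Gronwall inequality (Lemma \ref{lemma2}) under $\tau\le\tau_1$ yields
\begin{align*}
\|\eta_h^n\|^2 + c\,\tau\sum_{m=1}^n \|\hat{\eta}_h^m\|_{\text{DG}}^2 \le C(\tau^2 + h^{k+1})^2,
\end{align*}
which closes the induction on the $L^2$ bound and fixes $C_\star$.

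Finally, to promote the cumulative DG control of $\hat{\eta}_h^m$ to a pointwise DG bound on $\eta_h^n$, I would invoke the transfer inequality (Lemma \ref{lemma4}) followed by Cauchy-Schwarz:
\begin{align*}
\|\eta_h^n\|_{\text{DG}} \le 2\sum_{m=1}^n \|\hat{\eta}_h^m\|_{\text{DG}} \le 2\sqrt{n}\Bigl(\sum_{m=1}^n \|\hat{\eta}_h^m\|_{\text{DG}}^2\Bigr)^{1/2} \le C\,\tau^{-1}(\tau^2 + h^{k+1}),
\end{align*}
since $\eta_h^0 = 0$ and $n\tau \le T$. Multiplying by $\tau$ and combining with the $L^2$ estimate produces the desired bound $\|\eta_h^n\| + \tau\|\eta_h^n\|_{\text{DG}} \le C(\tau^2 + h^{k+1})$. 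As emphasized above, the hard part is the cubic nonlinearity in the third paragraph: without any a priori DG- or $L^\infty$-bound on $u_h^n$ at this stage, the argument must bootstrap smallness of $\|\eta_h^n\|$ from the induction hypothesis together with Theorem \ref{theorem2} in order to absorb the supercritical $\|\hat{\eta}_h^n\|\|\hat{\eta}_h^n\|_{\text{DG}}^2$ term into the coercivity.
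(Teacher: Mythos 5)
Your overall skeleton (error equation via the consistency relation, testing with $\hat{\eta}_h^n$, real parts, discrete Ladyzhenskaya, Gronwall, then the transfer inequality for the DG bound) coincides with the paper's, but your treatment of the cubic term creates a genuine gap that the paper's argument is specifically designed to avoid. By splitting $\mathcal{N}^n$ symmetrically and applying the elementary inequality \eqref{202511042246} to the bracket $|\widehat{R_h u}^n|^2\widehat{R_hu}^n-|\hat{u}_h^n|^2\hat{u}_h^n$, you take absolute values too early and are left with $\int|\hat{u}_h^n|^2|\hat{\eta}_h^n|^2\,\mathrm{d}\boldsymbol{x}$ as a term to be \emph{bounded}, which after \eqref{202509042305} produces the supercritical contribution $C\|\hat{\eta}_h^n\|\,\|\hat{\eta}_h^n\|_{\text{DG}}^2$. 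Your proposed fix does not close: the induction hypothesis controls $\|\eta_h^{n-1}\|$ but $\hat{\eta}_h^n=\tfrac12(\eta_h^n+\eta_h^{n-1})$ contains the current, implicitly defined level $\eta_h^n$, for which only the $O(1)$ bound \eqref{11031352} is available at this stage; and Young's inequality cannot split a product of the form $a\,b^2$ into $\epsilon b^2$ plus powers of $a$ when $a$ is merely $O(1)$ --- absorbing $\|\hat{\eta}_h^n\|_{\text{DG}}^2$ into the coercivity genuinely requires $\|\hat{\eta}_h^n\|$ small, which is exactly what you do not have.

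The paper's ``refined estimation'' sidesteps this by an asymmetric algebraic identity rather than the triangle-type inequality: it expands $|\hat{u}^n|^2\hat{u}^n-|\hat{u}_h^n|^2\hat{u}_h^n$ so that the only term quadratic in $\hat{u}_h^n$ is exactly $|\hat{u}_h^n|^2\hat{\eta}_h^n$. Tested against $\hat{\eta}_h^n$, this yields $-\operatorname{Re}\{(\kappa+\mathrm{i}\beta)\int|\hat{u}_h^n|^2|\hat{\eta}_h^n|^2\,\mathrm{d}\boldsymbol{x}\}=-\kappa\int|\hat{u}_h^n|^2|\hat{\eta}_h^n|^2\,\mathrm{d}\boldsymbol{x}\le 0$ since $\kappa>0$, so the dangerous quartic-type term is simply \emph{discarded by sign}, not estimated. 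The only remaining term cubic in $\hat{\eta}_h^n$ carries the coefficient $R_h\hat{u}^n$, which is bounded in $L^\infty$ by \eqref{202510311606}; combined with the $L^2$-boundedness $\|\hat{\eta}_h^n\|\le C$ from Theorem \ref{theorem2} it reduces to $C\|\hat{\eta}_h^n\|_4^2\le C\|\hat{\eta}_h^n\|_{\text{DG}}\|\hat{\eta}_h^n\|$, which Young's inequality absorbs into $\nu C_1\|\hat{\eta}_h^n\|_{\text{DG}}^2$ with no smallness assumption, no induction, and no coupling between $\tau$ and $h$. You should redo the third paragraph of your argument with this decomposition; the rest of your proposal then goes through essentially as written.
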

\begin{proof}
By virtue of the strong consistency of the interior penalty discontinuous Galerkin scheme \eqref{07271656}, it can be concluded that the exact solution satisfies the following equation at time $t_{n-\frac{1}{2}}$:
\begin{align}
	&(D_\tau u^{n},v_{h})+(\nu + \mathrm{i}\alpha)a_h(\hat{u}^n, v_h)+(\kappa + \mathrm{i}\beta)(\vert \hat{u}^n \vert^2 \hat{u}^n,v_h)-\gamma (\hat{u}^n,v_h)=(R^n,v_h),\quad \forall ~v_h\in V_h^k,\label{11032210}
\end{align}
where 
\begin{align*}
R^n &= D_\tau u^{n}-u_t^{n-\frac{1}{2}} + (\nu + \mathrm{i}\alpha)\left(\Delta u^{n-\frac{1}{2}}-\Delta\hat{u}^{n}\right)\notag\\
&\quad+(\kappa + \mathrm{i}\beta)\left(\vert \hat{u}^n \vert^2 \hat{u}^n-\vert u^{n-\frac{1}{2}} \vert^2 u^{n-\frac{1}{2}}\vert\right)-\gamma \left(\hat{u}^n
-u^{n-\frac{1}{2}}\right),
\end{align*}
and 
\begin{align*}
u_t^{n-\frac{1}{2}}=u_t(t_{n-\frac{1}{2}}),\quad u^{n-\frac{1}{2}} = u(t_{n-\frac{1}{2}}).
\end{align*}
Subtracting \eqref{11032210} from \eqref{07271656} and utilizing the fact $u^{n}=\xi^n+\eta_h^n$, we have the system of error equations
\begin{align}
	&\quad(D_\tau \eta_h^{n},v_{h})+(\nu + \mathrm{i}\alpha)a_h(\hat{\eta}_h^n, v_h)+(\kappa + \mathrm{i}\beta)\left(\vert \hat{u}^n \vert^2 \hat{u}^n-\vert \hat{u}_h^n \vert^2 \hat{u}_h^n,v_h\right)-\gamma (\hat{\eta}_h^n,v_h)\notag\\
	&=(R^n,v_h)-(D_\tau \xi^{n},v_{h})+\gamma (\hat{\xi}^n,v_h),\quad \forall ~v_h\in V_h^k.\label{11032239}
\end{align}
Choosing $v_h=\hat{\eta}_h^n$ in \eqref{11032239} and taking the real part of the resulting equation, it arrives at 
\begin{align}
\frac{1}{2\tau}\left(\|u_h^n\|^2-\|u_h^{n-1}\|^2\right)+\nu\|\hat{\eta}_h^n\|_{a_h}^2&=-\text{Re}\left\{(\kappa + \mathrm{i}\beta)\left(\vert \hat{u}^n \vert^2 \hat{u}^n-\vert \hat{u}_h^n \vert^2 \hat{u}_h^n,\hat{\eta}_h^n\right)\right\}+\gamma\|\hat{\eta}_h^n\|^2\notag\\
&\quad+\text{Re}(R^n,\hat{\eta}_h^n)-\text{Re}(D_\tau\xi^{n},\hat{\eta}_h^n)+\gamma\text{Re}(\hat{\xi}^n,\hat{\eta}_h^n).\label{11032305}
\end{align}
Next, we estimate each term on the right-hand side of equation \eqref{11032305}.  Noticing that 
\begin{align}
\vert\hat{u}^n \vert^2 \hat{u}^n-\vert \hat{u}_h^n \vert^2 \hat{u}_h^n&=\vert\hat{u}^n \vert^2 \left(\hat{\xi}^n+R_h\hat{u}^n\right)-\vert \hat{u}_h^n \vert^2 \left(R_h\hat{u}^n-\hat{\eta}_h^n\right)\notag\\
&= \vert\hat{u}^n \vert^2\hat{\xi}^n+R_h\hat{u}^n\left(\vert\hat{u}^n \vert^2-\vert\hat{u}_h^n \vert^2\right)+\vert\hat{u}_h^n \vert^2\hat{\eta}_h^n\notag\\
&= \vert\hat{u}^n \vert^2\hat{\xi}^n+R_h\hat{u}^n\left(\hat{u}^n (\hat{u}^n -\hat{u}_h^n )^*+(\hat{u}^n -\hat{u}_h^n )(\hat{u}_h^n)^*\right)+\vert\hat{u}_h^n \vert^2\hat{\eta}_h^n\notag\\
&= \vert\hat{u}^n \vert^2\hat{\xi}^n+R_h\hat{u}^n\left(\hat{u}^n (\hat{\xi}^n + \hat{\eta}_h^n )^*+(\hat{\xi}^n + \hat{\eta}_h^n )(R_h\hat{u}^n-\hat{\eta}_h^n)^*\right)+\vert \hat{u}_h^n \vert^2\hat{\eta}_h^n\notag\\
&=\vert\hat{u}^n \vert^2\hat{\xi}^n+R_h\hat{u}^n\left(\hat{u}^n( \hat{\xi}^n)^* + \hat{u}^n(\hat{\eta}_h^n)^*+\hat{\xi}^n(R_h\hat{u}^n)^*
-\hat{\xi}^n(\hat{\eta}_h^n)^*+\hat{\eta}_h^n(R_h\hat{u}^n)^*\right)\notag\\
&\quad-(R_h\hat{u}^n)\hat{\eta}_h^n(\hat{\eta}_h^n)^*+\vert\hat{u}_h^n \vert^2\hat{\eta}_h^n.\notag 
\end{align}
Then we can readily obtain the estimation for the first term on the right-hand side of equation \eqref{11032305}. In fact, 
\begin{align}
&\quad-\text{Re}\left\{(\kappa + \mathrm{i}\beta)\left(\vert \hat{u}^n \vert^2 \hat{u}^n-\vert \hat{u}_h^n \vert^2 \hat{u}_h^n,\hat{\eta}_h^n\right)\right\}\notag\\
&\leq C\|\hat{u}^n\|_{\infty}\|\hat{\xi}^n\|\|\hat{\eta}_h^n\|+\|R_h\hat{u}^n\|_{\infty}\big(\|\hat{u}^n\|_{\infty}\|\hat{\xi}^n\|\|\hat{\eta}_h^n\|+\|\hat{u}^n\|_{\infty}\|\hat{\eta}_h^n\|^2\notag\\
&\quad+\|R_h\hat{u}^n\|_{\infty}\|\hat{\xi}^n\|\|\hat{\eta}_h^n\|+\|\hat{\xi}^n\|_\infty\|\hat{\eta}_h^n\|^2+\|R_h\hat{u}^n\|_{\infty}\|\hat{\eta}_h^n\|^2\big)+\|R_h\hat{u}^n\|_{\infty}\|\hat{\eta}_h^n\|_4^2\|\hat{\eta}_h^n\|\notag\\
&\leq C(\|\hat{\xi}^n\|^2+\|\hat{\eta}_h^n\|^2+\|\hat{\eta}_h^n\|_4^2)\notag\\
&\leq C\left(\|\hat{\xi}^n\|^2+\|\hat{\eta}_h^n\|^2+\|\hat{\eta}_h^n\|_{\text{DG}}\|\hat{\eta}_h^n\|\right),\label{11041542}
\end{align}
where the Cauchy-Schwarz inequality is applied in the first inequality above, the boundedness \eqref{11031352} of $R_h$ in $L^2$-norm is used in the second inequality, and the discrete Ladyzhenskaya's inequality \eqref{202509042305} is employed in the third inequality. 
Next, the Taylor expansion formula with an integral remainder term can be used to obtain
\begin{align}
\text{Re}(R^n,\hat{\eta}_h^n)&\leq C\Big(\|D_\tau u^{n}-u^{n-\frac{1}{2}}\|+\|\Delta u^{n-\frac{1}{2}}-\Delta\hat{u}^{n}\|\notag\\
&\quad+\left\|\vert \hat{u}^n \vert^2 \hat{u}^n-\vert u^{n-\frac{1}{2}} \vert^2 u^{n-\frac{1}{2}}\right\|+\|\hat{u}^n
-u^{n-\frac{1}{2}}\|\Big)\|\hat{\eta}_h^n\|\notag\\
&\leq C(\tau^4+\|\hat{\eta}_h^n\|^2),\label{11041543}
\end{align}
where we have used the following estimates: 
\begin{align}
\|D_\tau u^{n}-u^{n-\frac{1}{2}}\|&= \frac{1}{2\tau}\left\|\int_{t_{n-1}}^{t_{n-\frac{1}{2}}}(s-t_{n-1})^2u_{ttt}(s)\text{d}s+\int_{t_{n-\frac{1}{2}}}^{t_{n}}(s-t_{n})^2u_{ttt}(s)\text{d}s\right\|\notag \leq C\tau^2\notag,
\end{align}
\begin{align}
&\|\Delta u^{n-\frac{1}{2}}-\Delta\hat{u}^{n}\|\leq C\tau \int_{t_{n-1}}^{t_n}\|\Delta u_{tt}\|\text{d}s\leq C\tau^2,\notag
\end{align}
\begin{align}
\left\|\vert \hat{u}^n \vert^2 \hat{u}^n-\vert u^{n-\frac{1}{2}} \vert^2 u^{n-\frac{1}{2}}\right\|&=\left\|\vert \hat{u}^n \vert^2(\hat{u}^n- u^{n-\frac{1}{2}})+\left(\vert \hat{u}^n \vert^2 -\vert u^{n-\frac{1}{2}} \vert^2\right) u^{n-\frac{1}{2}}\right\|\notag\\
&= \left\|\vert \hat{u}^n \vert^2(\hat{u}^n- u^{n-\frac{1}{2}})+\text{Re}\left( (\hat{u}^n-u^{n-\frac{1}{2}})( \hat{u}^n+u^{n-\frac{1}{2}})^*\right) u^{n-\frac{1}{2}}\right\|\notag\\
&\leq C\tau^2,\notag 
\end{align}
and 
\begin{align}
&\| u^{n-\frac{1}{2}}-\hat{u}^{n}\|\leq C\tau \int_{t_{n-1}}^{t_n}\| u_{tt}\|\text{d}s\leq C\tau^2.\notag
\end{align}
In the last, applying the Cauchy-Schwartz inequality and the approximation property \eqref{2510281415}, it follows that 
\begin{align}
-\text{Re}(D_\tau\xi^{n},\hat{\eta}_h^n)+\gamma\text{Re}(\hat{\xi}^n,\hat{\eta}_h^n)&\leq C (\|D_\tau\xi^{n}\|+\|\hat{\xi}^n\|)\|\hat{\eta}_h^n\|\notag\\
&=C\left(\frac{1}{\tau}\left\|\int_{t_{n-1}}^{t_n}(R_h(u_t)-u_t)\text{d}s\right\|+\|\hat{\xi}^n\|\right)\|\hat{\eta}_h^n\|\notag\\ 
&\leq C\left(h^{2k+2}+\|\hat{\eta}_h^n\|^2\right).\label{11041540}
\end{align}
Substituting \eqref{11041542}–\eqref{11041540} into \eqref{11032305} and multiplying both sides of the resulting equation by \(2\tau\) yields
\begin{align*}
\|\eta_h^n\|^2-\|\eta_h^{n-1}\|^2+2\nu C_{1}\tau\|\hat{\eta}_h^n\|_{\text{DG}}^2&\leq C\left(\tau^5+\tau h^{2k+2}+\tau\|\hat{\eta}_h^n\|^2+\tau\|\hat{\eta}_h^n\|_{\text{DG}}\|\hat{\eta}_h^n\|\right)\notag\\
&\leq C\left(\tau^5+\tau h^{2k+2}+\tau\|\hat{\eta}_h^n\|^2\right)+\nu C_1\tau
\|\hat{\eta}_h^n\|_{\text{DG}}^2.
\end{align*}
Combining like terms, we have 
\begin{align}
\|\eta_h^n\|^2-\|\eta_h^{n-1}\|^2+\nu C_{1}\tau\|\hat{\eta}_h^n\|_{\text{DG}}^2
\leq C\left(\tau^5+\tau h^{2k+2}+\tau\|\hat{\eta}_h^n\|^2\right), \quad 1\leq n \leq N.\label{11041612}
\end{align}
Replacing $n$ by $j$ in \eqref{11041612} and summing over $j$ from $1$ to $n$, we get 
\begin{align*}
\|\eta_h^n\|^2+\nu C_{1}\tau\sum\limits_{j=1}^{n}\|\hat{\eta}_h^j\|_{\text{DG}}^2\leq C(\tau^4+h^{2k+2})+ C\tau\sum\limits_{j=1}^{n}\|\eta_h^j\|^2, \quad 1\leq n \leq N.
\end{align*}
According to the Gronwall inequality presented in Lemma \ref{lemma3}, it holds that 
\begin{align}
\|\eta_h^n\|^2+\nu C_{1}\tau\sum\limits_{j=1}^{n}\|\hat{\eta}_h^j\|_{\text{DG}}^2\leq C(\tau^4+h^{2k+2}). \label{11041641}
\end{align}
Employing the transfer inequality \eqref{2507302329}, \eqref{11041641} and the Cauchy-Schwartz inequality give that 
\begin{align}
\tau\|\eta_h^n\|_{\text{DG}}\leq 2\tau \sum\limits_{j=1}^{n}\|\hat{\eta}_h^j\| \leq C{\sqrt{n\tau}}\sqrt{\sum\limits_{j=1}^{n}\nu C_{1}\tau\|\hat{\eta}_h^j\|_{\text{DG}}^2}\leq C(\tau^2+h^{k+1}).\label{123456}
\end{align}
In combination with \eqref{11041641}, it follows that the conclusion in \eqref{2508292249} is true. 
\end{proof}
With the above preparations, the boundedness of the numerical solution under the energy norm follows immediately.
\begin{theorem}
The solution $u_{h}^{n}$ of the fully discrete algorithm \eqref{07271656} is bounded in $\text{DG}$-norm, i.e., there exists a positive constant $C$ such that  
\begin{align*}
\|u_h^n\|_{\text{DG}}\leq C,\quad 0\leq n \leq N. 
\end{align*}
\end{theorem}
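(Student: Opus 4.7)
The plan is to write $u_h^n = R_h u^n - \eta_h^n$ and bound the two pieces separately in the DG norm. For the Ritz projection part, the approximation property \eqref{2510281415} together with the triangle inequality gives
\begin{align*}
\|R_h u^n\|_{\text{DG}} \leq \|R_h u^n - u^n\|_{\text{DG}} + \|u^n\|_{\text{DG}} \leq Ch^k \|u^n\|_{k+1} + \|\nabla u^n\|,
\end{align*}
since $u^n$ is continuous and vanishes on $\partial\Omega$, so all jump contributions in $\|u^n\|_{\text{DG}}$ are zero and that norm reduces to $\|\nabla u^n\|$. Under the standard regularity assumption on the exact solution this quantity is uniformly bounded in $n$.

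The substance of the proof is the bound on $\|\eta_h^n\|_{\text{DG}}$, where I would use the \emph{twin estimate} produced by Lemma \ref{lemma10}, namely $\|\eta_h^n\| + \tau\|\eta_h^n\|_{\text{DG}} \leq C(\tau^2 + h^{k+1})$ valid for $\tau \leq \tau_1$, together with a case split on the relative size of $\tau$ and $h$. When $\tau > h$, divide the weighted DG part by $\tau$ directly:
\begin{align*}
\|\eta_h^n\|_{\text{DG}} \leq C\left(\tau + \frac{h^{k+1}}{\tau}\right) \leq C(\tau + h^k) \leq C,
\end{align*}
using $\tau \leq T$ and the boundedness of $h$. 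When $\tau \leq h$, the $1/\tau$ factor would be harmful, so instead combine the $L^2$ estimate with the global inverse inequality \eqref{10312320}:
\begin{align*}
\|\eta_h^n\|_{\text{DG}} \leq Ch^{-1}\|\eta_h^n\| \leq C\left(\frac{\tau^2}{h} + h^k\right) \leq C(h + h^k) \leq C.
\end{align*}
Combining the two regimes with the bound on $\|R_h u^n\|_{\text{DG}}$ via the triangle inequality yields the claim.

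The conceptual obstacle was in fact cleared already in Lemma \ref{lemma10}, which is the real engine of the argument: it produces the $L^2$ and weighted DG estimates for $\eta_h^n$ \emph{without} any CFL-type coupling between $\tau$ and $h$, and without needing any a priori $L^\infty$ or $H^1$ control of $u_h^n$ (that control being exactly what one is trying to obtain). Given Lemma \ref{lemma10}, the DG boundedness reduces to the two-line algebraic case split above, with the inverse inequality invoked only in the regime $\tau \leq h$ where the direct division by $\tau$ would blow up. No additional smallness hypothesis beyond those already present ($\tau\gamma \leq 1$ and $\tau \leq \tau_1$) is required.
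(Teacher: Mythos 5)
Your argument is correct and follows essentially the same route as the paper: the same decomposition $u_h^n = R_h u^n - \eta_h^n$, the same case split on $\tau \leq h$ versus $\tau > h$, with the inverse inequality \eqref{10312320} applied to the $L^2$ part of Lemma \ref{lemma10} in the first regime and direct division of the weighted DG part by $\tau$ in the second. The only cosmetic difference is that the paper bounds $\|R_h u^n\|_{\text{DG}}$ by $C\|u^n\|_{\text{DG}}$ directly rather than via the approximation property, which changes nothing.
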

\begin{proof}
The core idea of the argument is to conduct a classified discussion on the spatial parameter $h$ and the temporal parameter $\tau$. when $\tau\leq h$,  it follows from the global inverse inequality \eqref{10312320} and \eqref{2508292249} that 
\begin{align}
\|\eta_h^n\|_{\text{DG}}\leq Ch^{-1}\|\eta_h^n\|\leq Ch\leq C.\label{11041715}
\end{align}
When $h<\tau$, we can easily obtain from  \eqref{2508292249} that 
\begin{align}
\|\eta_h^n\|_{\text{DG}}\leq C\tau^{-1} (\tau^2+h^{k+1})\leq C\tau\leq C.\label{11041719}
\end{align}
Through the combination of \eqref{11041715} and \eqref{11041719} and the fact that $\|R_hu^n\|_{\text{DG}}\leq C\|u^n\|_{\text{DG}}$, the truth of the theorem can be established.
\end{proof}
Using the boundedness of the numerical solution under the energy norm, the uniqueness of the numerical solution follows immediately.
\begin{theorem}
The solution of the discrete Galerkin scheme \eqref{07271656} is unique for sufficiently small $\tau$.
\end{theorem}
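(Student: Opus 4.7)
The plan is to argue by induction on the time level $n$, with base case $u_h^0 = R_h u^0$ trivially unique. For the inductive step, assume that $u_h^{n-1}$ is the unique solution at level $n-1$, let $u_h^n$ and $\tilde u_h^n$ be two potential solutions of \eqref{07271656} at level $n$ sharing this $u_h^{n-1}$, and set $w_h := u_h^n - \tilde u_h^n$ so that $D_\tau w_h = w_h/\tau$ and $\hat w_h = w_h/2$. Subtracting the two instances of \eqref{07271656} yields the error identity
\begin{align*}
\frac{1}{\tau}(w_h, v_h) + (\nu + \mathrm{i}\alpha) a_h(\hat w_h, v_h) + (\kappa + \mathrm{i}\beta)\bigl(|\hat u_h^n|^2 \hat u_h^n - |\tilde{\hat u}_h^n|^2 \tilde{\hat u}_h^n, v_h\bigr) - \gamma(\hat w_h, v_h) = 0
\end{align*}
for every $v_h \in V_h^k$, where $\tilde{\hat u}_h^n := (\tilde u_h^n + u_h^{n-1})/2$.

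I would then test with $v_h = \hat w_h$ and take real parts. Since $(w_h, \hat w_h) = 2\|\hat w_h\|^2$ is already real, the coercivity estimate \eqref{2508251529} converts the linear part into a lower bound of the form $\frac{2}{\tau}\|\hat w_h\|^2 + \nu C_1\|\hat w_h\|_{\text{DG}}^2$, while the reactive contribution $-\gamma\|\hat w_h\|^2$ can be absorbed into a generic constant. The imaginary coefficients $\alpha,\beta$ drop out on this side, so the only dangerous quantity is the cubic difference term on the right-hand side.

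The main obstacle, and the reason the preceding DG-boundedness theorem is needed, is dominating that cubic difference. I would employ the pointwise bound \eqref{202511042246} and then H\"older's inequality with exponents $(2,4,4)$ to write
\begin{align*}
\bigl|(|\hat u_h^n|^2 \hat u_h^n - |\tilde{\hat u}_h^n|^2 \tilde{\hat u}_h^n, \hat w_h)\bigr| \leq \bigl(\|\hat u_h^n\|_4^2 + \|\hat u_h^n\|_4 \|\tilde{\hat u}_h^n\|_4 + \|\tilde{\hat u}_h^n\|_4^2\bigr) \|\hat w_h\|_4^2,
\end{align*}
where $\|\hat u_h^n - \tilde{\hat u}_h^n\| = \|\hat w_h\|$ has been used to combine one factor of $\hat w_h$ with the one from testing. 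The Sobolev embedding \eqref{11042314} together with the DG-boundedness of $u_h^n$ and $\tilde u_h^n$ (and hence of their averages) proved in the previous theorem bounds the bracket uniformly by $C$, and the discrete Ladyzhenskaya inequality \eqref{202509042305} yields $\|\hat w_h\|_4^2 \leq C\|\hat w_h\|_{\text{DG}}\|\hat w_h\|$.

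A standard Young's inequality then absorbs the gradient factor, giving $C\|\hat w_h\|_{\text{DG}}\|\hat w_h\| \leq \frac{\nu C_1}{2}\|\hat w_h\|_{\text{DG}}^2 + C\|\hat w_h\|^2$, so that, after discarding the nonnegative DG-norm remainder, one arrives at $\bigl(\frac{2}{\tau} - C\bigr)\|\hat w_h\|^2 \leq 0$. For any $\tau$ with $\tau < 2/C$ this forces $\hat w_h = 0$, hence $w_h = 0$ and $u_h^n = \tilde u_h^n$, completing the induction. The hard part is genuinely the cubic estimate: without the a priori DG-bound established just above, the bracket $\|\hat u_h^n\|_4^2 + \ldots$ would carry no uniform constant, and the $\tau$-smallness threshold would be impossible to state independently of the (unknown) solution.
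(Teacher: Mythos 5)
Your proposal is correct and follows essentially the same route as the paper: fix $u_h^{n-1}$, subtract the two instances of the scheme, test with the difference of the averages, take real parts, and control the cubic difference via the elementary inequality \eqref{202511042246}, H\"older, the embedding \eqref{11042314} combined with the a priori DG-boundedness of the solutions, the discrete Ladyzhenskaya inequality \eqref{202509042305}, and Young's inequality, arriving at $\|w_h\| \leq C\tau\|w_h\|$. The only cosmetic difference is the choice of H\"older exponents ($(2,4,4)$ placing both difference factors in $L^4$, versus the paper's $(8,8,4,2)$ split yielding $\|w_1-w_2\|_{\text{DG}}^{1/2}\|w_1-w_2\|^{3/2}$); both are absorbed identically.
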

\begin{proof}
Adopt the background in Lemma \ref{theorem1}. To prove the uniqueness of the numerical solution, suppose that $w_1$ and $w_2$ satisfy
\begin{align}
&\frac{2}{\tau}\left(w_1-u_{h}^{n-1},v_{h}\right)+(\nu + \mathrm{i}\alpha)a_h(w_1, v_h)+(\kappa + \mathrm{i}\beta)(\vert w_1 \vert^2 w_1,v_h)-\gamma (w_1,v_h)=0, \quad \forall ~v_h\in V_h^k,\label{11042111}\\
&\frac{2}{\tau}\left(w_2-u_{h}^{n-1},v_{h}\right)+(\nu + \mathrm{i}\alpha)a_h(w_2, v_h)+(\kappa + \mathrm{i}\beta)(\vert w_2 \vert^2 w_2,v_h)-\gamma (w_2,v_h)=0, \quad \forall ~v_h\in V_h^k,\label{11042112}
\end{align}
respectively. Subtracting \eqref{11042112} from  \eqref{11042111}, taking $v_h=w_1-w_2$ and considering the real part on both sides of the corresponding equation give 
\begin{align}
&\quad\frac{2}{\tau} \|w_1-w_2\|^2+\nu\|w_1-w_2\|_{a_h}^2\notag\\
&=-Re\left((\kappa + \mathrm{i}\beta)(\vert w_1 \vert^2 w_1-\vert w_2 \vert^2 w_2,w_1-w_2) \right)+\gamma\|w_1-w_2\|^2\notag\\
&\leq C \left((\|w_1\|_{8}^2+\|w_1\|_8\|w_2\|_8+\|w_2\|_8^2)\|w_1-w_2\|_{4}\|w_1-w_2\|+\|w_1-w_2\|^2\right)\notag\\
&\leq C \left((\|w_1\|_{\text{DG}}^2+\|w_1\|_{\text{DG}}\|w_2\|_{\text{DG}}+\|w_2\|_{\text{DG}}^2)\|w_1-w_2\|_{\text{DG}}^\frac{1}{2}\|w_1-w_2\|^{\frac{3}{2}}+\|w_1-w_2\|^2\right)\notag\\
&\leq C \|w_1-w_2\|^2+\nu\|w_1-w_2\|_{a_h}^2,\label{11042322}
\end{align}
where we have used the elementary inequality \eqref{202511042246} and Hölder inequality in the first inequality, followed by the imbedding inequality \eqref{11042314} and discrete Ladyzhenskaya's inequality \eqref{202509042305} in the second equality, and the Young inequality in the third inequlity. 
From \eqref{11042322}, we can immediately obtain
\begin{align*}
\|w_1-w_2\|\leq C\tau \|w_1-w_2\|.
\end{align*} 
Therefore, for a suitably small $\tau$, we have $w_1=w_2$, i.e., the scheme \eqref{07271656} is uniquely solvable.
\end{proof}
In tha last, we establish the unconditional optimal error estimates of the numerical solution in both the \(L^2\)-norm and the energy norm.
\begin{theorem}
Let $u^n$ and $u_h^n$ be the solutions of \eqref{202510302231}-\eqref{10302242} and \eqref{07271656}, respectively, then there exist two positive constants $\tau_0$ and $h_0$ such that when $\tau\leq\tau_1$, $h\leq h_1$, it holds that 
\begin{align}
\|u^{n}-u_{h}^{n}\|\leq C\left(\tau^2+h^{k+1}\right),\quad \|u^{n}-u_{h}^{n}\|_{\text{DG}}\leq C\left(\tau^2+h^{k}\right).
\end{align}
\end{theorem}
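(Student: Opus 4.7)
The plan is to use the Ritz-projection splitting $u^n-u_h^n=\xi^n+\eta_h^n$ (with $\xi^n=u^n-R_hu^n$ and $\eta_h^n=R_hu^n-u_h^n$) combined with the triangle inequality, reducing the theorem to independent control of each piece. The projection part is handled immediately by \eqref{2510281415}, which delivers $\|\xi^n\|\le Ch^{k+1}\|u^n\|_{k+1}$ and $\|\xi^n\|_{\mathrm{DG}}\le Ch^{k}\|u^n\|_{k+1}$; under the standing smoothness assumption on $u$ these contributions already sit inside the target rates, so the real work lies entirely with $\eta_h^n$.

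For the $L^2$ bound the argument collapses to a one-liner: Lemma~\ref{lemma10} gives $\|\eta_h^n\|\le C(\tau^2+h^{k+1})$ directly, and the triangle inequality then yields $\|u^n-u_h^n\|\le C(\tau^2+h^{k+1})$.

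For the $\mathrm{DG}$-norm estimate, I would reuse the case analysis that was already used to establish the uniform $\mathrm{DG}$-boundedness of $u_h^n$, splitting on whether $\tau\le h$ or $\tau>h$. In the first regime I apply the global inverse inequality \eqref{10312320} to the $L^2$ part of Lemma~\ref{lemma10}:
\[
\|\eta_h^n\|_{\mathrm{DG}}\le Ch^{-1}\|\eta_h^n\|\le Ch^{-1}\bigl(\tau^2+h^{k+1}\bigr).
\]
In the second regime I use the $\mathrm{DG}$-summand of Lemma~\ref{lemma10} directly:
\[
\|\eta_h^n\|_{\mathrm{DG}}\le C\tau^{-1}\bigl(\tau^2+h^{k+1}\bigr).
\]
Combined with $\|\xi^n\|_{\mathrm{DG}}\le Ch^{k}$ and the bookkeeping inequalities $\tau,h\le 1$, $k\ge 1$ (so that $h^{k+1}/\tau\le h^k$ when $h<\tau$ and $h^{-1}\tau^2\le\tau$ when $\tau\le h$), this produces the claimed $\mathrm{DG}$-rate after one more application of the triangle inequality.

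The main obstacle is extracting the sharp $\tau^{2}$ (as opposed to merely $\tau$) on the $\mathrm{DG}$ side. Since Lemma~\ref{lemma10} controls only the product $\tau\|\eta_h^n\|_{\mathrm{DG}}$, dividing by $\tau$ a priori costs a full power of $\tau$, and the naive case split above is tight at $\tau+h^{k}$. If this turns out to be insufficient for larger $k$, the fallback would be to re-run the energy estimate of Lemma~\ref{lemma10} testing against $v_h=D_\tau\eta_h^n$ instead of $\hat\eta_h^n$, yielding a pointwise bound of the form $\|\eta_h^n\|_{\mathrm{DG}}^2+\tau\sum_j\|D_\tau\eta_h^j\|^2\le C(\tau^4+h^{2k+2})$. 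The delicate point there is the non-telescoping cross-term $-\alpha\,\mathrm{Im}\,a_h(\eta_h^{n-1},\eta_h^n)/\tau$ generated by the imaginary part of $(\nu+\mathrm{i}\alpha)\,a_h(\hat\eta_h^n,D_\tau\eta_h^n)$; it must be controlled via the skew-Hermitian structure of $a_h$ and absorbed into the discrete $\mathrm{DG}$-energy before closing by the discrete Grönwall inequality of Lemma~\ref{lemma2}.
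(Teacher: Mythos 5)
Your $L^2$ estimate is exactly the paper's argument: the splitting $u^n-u_h^n=\xi^n+\eta_h^n$, the bound $\|\xi^n\|\le Ch^{k+1}$ from \eqref{2510281415}, and the bound $\|\eta_h^n\|\le C(\tau^2+h^{k+1})$ read off from Lemma \ref{lemma10}. That half is complete and correct.

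The $\mathrm{DG}$ half as you present it does not prove the stated rate. The case split on $\tau\le h$ versus $h<\tau$ is the device the paper uses only for the much weaker conclusion $\|u_h^n\|_{\mathrm{DG}}\le C$; applied to the error it is stuck at first order in time, since $h^{-1}\tau^2\le\tau$ when $\tau\le h$ and $\tau^{-1}\tau^2=\tau$ when $h<\tau$, so both branches give $C(\tau+h^k)$ rather than $C(\tau^2+h^k)$. You notice this yourself ("tight at $\tau+h^k$"), but you promote this argument to the main line and misdiagnose the shortfall as an issue only "for larger $k$" — in fact a full power of $\tau$ is lost for every $k$. What you call the fallback is the paper's actual proof: test the error equation \eqref{11032239} with $v_h=D_\tau\eta_h^n$, take real parts, use the already-established $L^2$ bound on $\eta_h^n$ and the $\mathrm{DG}$-boundedness of $u_h^n$ to control the cubic term, multiply by $2\tau$, sum, and close with the discrete Gronwall inequality to obtain $\|\eta_h^n\|_{\mathrm{DG}}\le C(\tau^2+h^{k+1})$, after which the triangle inequality with $\|\xi^n\|_{\mathrm{DG}}\le Ch^k$ finishes. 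Since you only sketch this step and carry out none of its estimates, the proposal does not establish the theorem. Your remark about the non-telescoping term $-\frac{\alpha}{\tau}\,\mathrm{Im}\,a_h(\eta_h^{n-1},\eta_h^n)$ produced by the imaginary part of $(\nu+\mathrm{i}\alpha)a_h(\hat\eta_h^n,D_\tau\eta_h^n)$ is a genuinely sharp observation — the paper's displayed identity \eqref{11052141} silently omits it — but identifying the obstacle is not the same as overcoming it; a complete proof must show how that term is absorbed into the discrete energy before Gronwall can be applied.
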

\begin{proof}
We first present the optimal order error estimate in the $L^2$-norm. For this purpose, utilizing the triangle inequality and Lemma \ref{lemma10}, we have 
\begin{align}
\|u^{n}-u_{h}^{n}\|\leq \|\xi^n\|+\|\eta^n_{h}\|\leq C\left(\tau^2+h^{k+1}\right).\label{11051049}
\end{align}
Next, taking $v_h=D_\tau \eta_h^{n}$ in \eqref{11032239} and the real part together with the similar procedure as in Lemma \ref{lemma10} lead to 
\begin{align}
&\quad\|D_\tau \eta_h^{n}\|^2+\frac{\nu}{2\tau}\left(\|\eta_h^n\|_{a_h}^2-\|\eta_h^{n-1}\|_{a_h}^2\right)\notag\\
&=-\text{Re}\left\{(\kappa + \mathrm{i}\beta)\left(\vert \hat{u}^n \vert^2 \hat{u}^n-\vert \hat{u}_h^n \vert^2 \hat{u}_h^n,D_\tau \eta^{n}_h\right)\right\}-\gamma\left(\hat{\eta}_h^n,D_\tau \eta_h^{n}\right)\notag\\
&\quad+\text{Re}(R^n,D_\tau \eta_h^{n})-\text{Re}(D_\tau \xi^{n},D_\tau \eta_h^{n})+\gamma \text{Re}(\hat{\xi}^n,D_\tau \eta_h^{n})\notag\\
&\leq C\left(\tau^4+h^{2k+2}\right)+\|D_\tau \eta_h^{n}\|^2+C\|\eta_h^n\|_{\text{DG}}^2.\label{11052141}
\end{align}
It should be emphasized here that we used the $L^2$-norm error estimation \eqref{11051049} in the above argument. 
Multiplying both sides of inequality \eqref{11052141} by $2\tau$ and summing up, we get 
\begin{align*}
\|\eta_h^n\|_{\text{DG}}\leq C\tau\sum\limits_{m=1}^{n} \|\eta_h^m\|_{\text{DG}}^2+C\left(\tau^2+h^{2k+2}\right).
\end{align*}
Applying the discrete Gronwall inequality, we have 
\begin{align*}
\|\eta_h^n\|_{\text{DG}}\leq C\left(\tau^2+h^{k+1}\right).
\end{align*}
It follows from the triangle inequality that 
\begin{align*}
\|u^n-u_h^n\|_{\text{DG}}\leq \|\xi^n\|_{\text{DG}}+\|\eta_h^n\|_{\text{DG}}\leq C\left(\tau^2+h^{k}\right).
\end{align*}
The proof is completed.
\end{proof}
\section{Numerical examples}\label{section4}
In this section, we present two numerical examples to validate the theoretical findings of this work. All numerical results are implemented using the open-source finite element software FreeFEM \cite{2021Hecht}.

\begin{example}\label{example01}
Consider the following Ginzburg-Landau equation
\begin{align}
u_t-(1 + \mathrm{i})\Delta u + (1 + \mathrm{i})\vert u \vert^2 u -  u = f(x,y,t), \quad (x,y) \in (0,1) \times (0,1),~t\in (0,T], \notag 
\end{align}
with the exact solution 
\begin{align}
u(x,y,t) = \sin(\pi x)\sin(\pi y)\text{e}^{\text{i}t^2},\notag
\end{align}
and the source function $f(x,y,t)$ is determined by the above exact solution.

The numerical results of this example are presented in Tables \ref{table1}–\ref{table4}. To test the convergence orders in the spatial direction, we select $T = 2 \times 10^{-6}$, $N = 20$ and  the spatial step sizes $h=1/5,1/10,1/15,1/20,1/25$ under different polynomial degrees $k=1,2,3$.
In Tables \ref{table1} to \ref{table3}, the errors and optimal convergence accuracies under different norms are presented. As evident from the tabulated numerical results, it can be observed that the convergence orders of the errors in the $L^2$-norm and \text{DG}-norm both achieve the theoretically expected accuracies \(\mathcal{O}(h^{k+1})\) and \(\mathcal{O}(h^{k})\) as derived earlier. To test the convergence orders in the temporal direction, we set \(T = 1\), \(\tau = h\) and $k=3$, and the relevant computational results are presented in Table \ref{table4}. The results suggest that the accuracy in the temporal direction achieves second-order convergence. The above results in the spatial and temporal directions fully demonstrate the effectiveness and robustness of the algorithm proposed in this paper. 

\begin{table}
	\centering
	\caption{The spatial convergence orders with $k=1$ at the final time $T=2\text{e}-6$.}\label{table1}
	\begin{tabular}{ccccc}
		\toprule 
		$h$ & $\|u^N-u_h^N\|$ & $\text{Order}$ & $\|u^N-u_h^N\|_{\text{DG}}$ & $\text{Order}$\\
		\midrule
		1/5 &   5.0867e-02 &  --      &  6.7062e-01     & --\\
		1/10 &  1.3085e-02 &  1.9588  & 3.3807e-01  & 0.9882\\
		1/15 &  5.7763e-03 &  2.0167  & 2.2379e-01  &1.0175 \\
		1/20 &  3.2100e-03 &  2.0421  & 1.6631e-01  & 1.0318\\
		1/25 &  2.0268e-03 &  2.0608  & 1.3178e-01  & 1.0428\\
		\bottomrule
	\end{tabular}
\end{table}
\begin{table}
	\centering
	\caption{The spatial convergence orders with $k=2$ at the final time $T=2\text{e}-6$.}\label{table2}
	\begin{tabular}{ccccc}
		\toprule 
		$h$ & $\|u^N-u_h^N\|$ & $\text{Order}$ & $\|u^N-u_h^N\|_{\text{DG}}$ & $\text{Order}$\\
		\midrule
		1/5 &   1.8174e-03 &  --      &  8.1516e-02     & --\\
		1/10 &  2.1470e-04 &  3.0815  & 2.0340e-02  & 2.0027\\
		1/15 &  6.0179e-05 &  3.1369  & 8.9062e-03  &2.0368 \\
		1/20 &  2.4057e-05 &  3.1871  & 4.9354e-03 & 2.0520\\
		1/25 &  1.1687e-05 &  3.2354 & 3.1162e-03  & 2.0606\\
		\bottomrule
	\end{tabular}
\end{table}
\begin{table}
	\centering
	\caption{The spatial convergence orders with $k=3$ at the final time $T=2\text{e}-6$.}\label{table3}
	\begin{tabular}{ccccc}
		\toprule 
		$h$ & $\|u^N-u_h^N\|$ & $\text{Order}$ & $\|u^N-u_h^N\|_{\text{DG}}$ & $\text{Order}$\\
		\midrule
		1/5 &   1.6529e-04 &  --      &  3.2644e-03     & --\\
		1/10 &  9.9187e-06 &  4.0587  & 3.9142e-04 & 3.0600\\
		1/15 &  1.9195e-06 &  4.0505  & 1.1472e-04  &3.0269 \\
		1/20 &  5.9857e-07 &  4.0507  & 4.8330e-05  & 3.0049\\
		1/25 &  2.4216e-07 &  4.0555  & 2.4829e-05  & 2.9848\\
		\bottomrule
	\end{tabular}
\end{table}
\begin{table}
	\centering
	\caption{The temporal convergence orders with $k=3$ at the final time $T=1$.}\label{table4}
	\begin{tabular}{ccccc}
		\toprule 
		$\tau$ & $\|u^N-u_h^N\|$ & $\text{Order}$ & $\|u^N-u_h^N\|_{\text{DG}}$ & $\text{Order}$\\
		\midrule
		1/20 &   1.8177e-05 &  --      &  9.8640e-05     & --\\
		1/25 &   1.1607e-05 &  2.0100  & 6.0324e-05 & 2.2037\\
		1/30 &   8.0512e-06 &  2.0063  & 4.0810e-05  &2.1435 \\
		1/35 &   5.9112e-06 &  2.0044  & 2.9499e-05  & 2.1056\\
		1/40 &   4.5238e-06 &  2.0032  & 2.2344e-05  & 2.0804\\
		\bottomrule
	\end{tabular}
\end{table}
\end{example}
\begin{example}\label{example02}
In \eqref{202510302231}-\eqref{10302242}, let $\alpha=\beta=\gamma=\nu=\kappa=1$ and $\Omega=(-1,1)\times(-1,1)$. The analytical solution is taken as 
\begin{align}
u(x,y,t) = (1+x)^4(1-x)^4(1+y)^4(1-y)^4\text{e}^{\text{i}t}.\notag
\end{align}
From the above exact solution, the initial condition and the right-hand side function can be directly derived.

The results of the second numerical example are presented in Tables \ref{table5} to \ref{table8}.
They are in excellent agreement with the theoretical arguments. In the test of convergence accuracy in the temporal direction, we fix \(h = 1/50\) and vary the time step size, and the numerical results achieve the expected second-order accuracy. The above numerical results further verify the correctness of the theoretical findings.  

\begin{table}
	\centering
	\caption{The spatial convergence orders with $k=1$ at the final time $T=2\text{e}-6$.}\label{table5}
	\begin{tabular}{ccccc}
		\toprule 
		$h$ & $\|u^N-u_h^N\|$ & $\text{Order}$ & $\|u^N-u_h^N\|_{\text{DG}}$ & $\text{Order}$\\
		\midrule
		2/10 &   3.7771e-02 &  --      & 4.9915e-01     & --\\
		2/15 &   1.7320e-02 &  1.9229  & 3.3706e-01 & 0.9684\\
		2/20 &   9.8114e-03 &  1.9756  & 2.5324e-01  &0.9939 \\
		2/25 &   6.2780e-03 &  2.0009  & 2.0231e-01  &1.0063\\
		2/30 &   4.3464e-03 &  2.0168  & 1.6815e-01  &1.0143\\
		\bottomrule
	\end{tabular}
\end{table}
\begin{table}
	\centering
	\caption{The spatial convergence orders with $k=2$ at the final time $T=2\text{e}-6$.}\label{table6}
	\begin{tabular}{ccccc}
		\toprule 
		$h$ & $\|u^N-u_h^N\|$ & $\text{Order}$ & $\|u^N-u_h^N\|_{\text{DG}}$ & $\text{Order}$\\
		\midrule
		2/10 &   1.6974e-03 &  --      & 6.8834e-02     & --\\
		2/15 &   4.9250e-04 &  3.0517  & 3.0846e-02 & 1.9797\\
		2/20 &   2.0308e-04 &  3.0794  & 1.7317e-02  &2.0069 \\
		2/25 &   1.0153e-04 &  3.1065  & 1.1027e-02  &2.0228\\
		2/30 &   5.7362e-05 &  3.1319  & 7.6110e-03  &2.0332\\
		\bottomrule
	\end{tabular}
\end{table}

\begin{table}
	\centering
	\caption{The spatial convergence orders with $k=3$ at the final time $T=2\text{e}-6$.}\label{table7}
	\begin{tabular}{ccccc}
		\toprule 
		$h$ & $\|u^N-u_h^N\|$ & $\text{Order}$ & $\|u^N-u_h^N\|_{\text{DG}}$ & $\text{Order}$\\
		\midrule
		2/10 &   1.6237e-04 &  --      & 3.5477e-03     & --\\
		2/15 &   3.2170e-05 &  3.9925  & 1.0471e-03 & 3.0095\\
		2/20 &   1.0154e-05 &  4.0083  & 4.4184e-04  &2.9993 \\
		2/25 &   4.1470e-06 &  4.0132  & 2.2656e-04  &2.9933\\
		2/30 &   1.9939e-06 &  4.0166  & 1.3139e-04  &2.9883\\
		\bottomrule
	\end{tabular}
\end{table}

\begin{table}
	\centering
	\caption{The temporal convergence orders with $k=3$ at the final time $T=1$.}\label{table8}
	\begin{tabular}{ccccc}
		\toprule 
		$\tau$ & $\|u^N-u_h^N\|$ & $\text{Order}$ & $\|u^N-u_h^N\|_{\text{DG}}$ & $\text{Order}$\\
		\midrule
		1/10 &   2.8739e-04 &  --      &  7.1514e-04     & --\\
		1/15 &   1.2805e-04 & 1.9938   & 3.2113e-04 & 1.9745\\
		1/20 &   7.2071e-05 & 1.9979  & 1.8064e-04  & 2.0000\\
		1/25 &   4.6139e-05 & 1.9987   & 1.1568e-04  & 1.9971\\
		1/30 &   3.2046e-05 & 1.9991   & 8.0405e-05  & 1.9952\\
		\bottomrule
	\end{tabular}
\end{table}
\end{example}

\section{Conclusions}\label{section5}
Based on the Crank-Nicolson scheme, this paper proposes a fully implicit discontinuous Galerkin fully discrete scheme. The primary innovation of this work resides in the development of a novel analytical technique, through which the unique solvability and optimal error estimates of the numerical solution are rigorously derived. This work’s proof framework can be effectively utilized as a reference for analyzing fully implicit numerical schemes applied to a broader class of nonlinear partial differential equations. For future research, several promising directions are worthy of exploration. First, extending the proposed analytical reasoning technique to three-dimensional settings merits further investigation. Second, this study only employs the second-order Crank-Nicolson scheme for analytical purposes. In fact, it remains an open question whether the developed analytical method can be generalized to high-order temporal discretization schemes, such as the spectral collocation method and the discontinuous Galerkin time-stepping method.

\section*{Declaration of competing interest}
The authors declare that they have no known competing financial interests or personal relationships that could have appeared to influence the work reported in this paper.

\section*{Data availability}
Data will be made available on request.

\section*{Acknowledgments}
This work is supported by the Doctoral Starting Foundation of Pingdingshan University (No. PXY-BSQD2023022) and the Natural Science Foundation of Henan Province (Nos. 242300420655, 252300420343).

\end{document}